\crefname{hypothesis}{Hypothesis}{Hypotheses}
\DeclareMathOperator*{\argmin}{arg\,min}
\title{Convergence analysis of the Alternating Anderson-Picard method for nonlinear fixed-point problems\thanks{Submitted to the editors DATE: July 14, 2024.
\funding{
This work was supported, in part, by the Office of Advanced Scientific Computing Research and performed at the Oak Ridge National Laboratory, which is managed by UT-Battelle, LLC for the US Department of Energy under Contract No. DE-AC05-00OR22725. T.S.\ and X.F.\ acknowledge support from NSF DMS-2208356, NIH R01HL16351. T.S., X.F.\ and P.L. acknowledge support from DE-AC05-00OR22725.\\
This manuscript has been authored, in part, by UT-Battelle, LLC under Contract No. DE-AC05-00OR22725 with the U.S. Department of Energy. The United States Government retains and the publisher, by accepting the article for publication, acknowledges that the United States Government retains a non-exclusive, paid-up, irrevocable, world-wide license to publish or reproduce the published form of this manuscript, or allow others to do so, for United States Government purposes. The Department of Energy will provide public access to these results of federally sponsored research in accordance with the DOE Public Access Plan(\url{http://energy.gov/downloads/doe-public-access-plan}).}}}
\author{Xue Feng\thanks{University of California, Davis, Davis, CA
  (\email{xffeng@ucdavis.edu},  \email{strohmer@math.ucdavis.edu}).}
\and M.~Paul Laiu\thanks{Oak Ridge National Laboratory, Oak Ridge, TN 
  (\email{laiump@ornl.gov}).}
\and Thomas Strohmer\footnotemark[2]}
\DeclareMathOperator{\diag}{diag}
\newtheorem{assumption}{Assumption}[section]
\newcommand{\bbR}{\mathbb{R}}
\newcommand{\bbB}{\mathbb{B}_t}
\newcommand{\bfalpha}{\boldsymbol{\alpha}}
\newcommand{\contraction}{\kappa}
\newcommand{\E}{m}
\newcommand{\spans}{\mathrm{span}}
\newcommand{\cond}{\mathrm{cond}}
\newcommand{\proj}{\mathcal{P}}
\newcommand{\damping}{\beta_t}
\renewcommand{\d}{\mathrm{d}}
\newcommand{\x}{\boldsymbol{x}}
\newcommand{\f}{\boldsymbol{f}}
\newcommand{\s}{\boldsymbol{s}}
\newcommand{\y}{\boldsymbol{y}}
\newcommand{\p}{\boldsymbol{p}}
\newcommand{\z}{\boldsymbol{z}}
\renewcommand{\r}{\boldsymbol{r}}
\renewcommand{\b}{\boldsymbol{b}}
\renewcommand{\v}{\boldsymbol{v}}
\newcommand{\Et}{\boldsymbol{E}_t}
\newcommand{\Jt}{\boldsymbol{J}_t}
\newcommand{\Bt}{\boldsymbol{B}_t}
\newcommand{\Gt}{\boldsymbol{G}_t}
\newcommand{\Ft}{\boldsymbol{F}_t}
\newcommand{\St}{\boldsymbol{S}_t}
\newcommand{\Yt}{\boldsymbol{Y}_t}
\newcommand{\Ht}{\boldsymbol{H}_t}
\newcommand{\bfA}{\boldsymbol{A}}
\newcommand{\bfK}{\boldsymbol{K}}
\newcommand{\bfI}{\boldsymbol{I}}
\newcommand{\bfB}{\boldsymbol{B}}
\newcommand{\bfS}{\boldsymbol{S}}
\newcommand{\bfY}{\boldsymbol{Y}}
\newcommand{\bfD}{\boldsymbol{D}}
\newcommand{\bfQ}{\boldsymbol{Q}}
\newcommand{\bfV}{\boldsymbol{V}}
\newcommand{\bfW}{\boldsymbol{W}}
\newcommand{\bfH}{\boldsymbol{H}}
\newcommand{\Deltat}{\boldsymbol{\Delta}_t}
\newcommand{\bfDelta}{\boldsymbol{\Delta}}
\newcommand{\bfepsilon}{\boldsymbol{\epsilon}}
\newcommand{\bfLambda}{\boldsymbol{\Lambda}}
\newcommand{\edit}[1]{{\color{black} #1}}
\begin{document}

\maketitle

\begin{abstract}
Anderson Acceleration (AA) has been widely used to solve nonlinear fixed-point problems due to its rapid convergence. This work focuses on a variant of AA in which multiple Picard iterations are performed between each AA step, referred to as the Alternating Anderson-Picard (AAP) method. Despite introducing more ``slow'' Picard iterations, this method has been shown to be efficient and even more robust in both linear and nonlinear cases.
However, there is a lack of theoretical analysis for AAP in the nonlinear case.
In this paper, we address this gap by establishing the equivalence between AAP and a multisecant-GMRES method that uses GMRES to solve a multisecant linear system at each iteration. 
\edit{
From this perspective, we show that AAP ``converges'' to the Newton-GMRES method. 
Specifically, as the residual approaches zero, the multisecant matrix, the approximate Jacobian inverse, the search direction, and the optimization gain of AAP converge to their counterparts in the Newton-GMRES method.
These connections provide insights for analyzing the asymptotic convergence properties of AAP.
Consequently, we show that AAP is locally $q$-linear convergent and provide an upper bound for the convergence factor of AAP. 
To validate the theoretical results, numerical examples are provided.}

\end{abstract}

\begin{keywords}
Anderson acceleration, alternating Anderson-Picard (AAP) method, Newton-GMRES, convergence 
\end{keywords}

\begin{MSCcodes}
68Q25, 68R10, 68U05
\end{MSCcodes}

\section{Introduction}
Fixed-point iterations are one of the cornerstones in 
scientific computing, formulated by
\begin{equation}
\label{fixedpoint}
    \x = g(\x),
\end{equation}
where \( \x \in \mathbb{R}^d \) and \( g: \mathbb{R}^d \rightarrow \mathbb{R}^d \) is assumed to be a continuously differentiable operator in this work. 
The standard method for solving \eqref{fixedpoint} is the fixed-point iteration, also known as the Picard iteration,
noted for its simplicity but also for its potentially slow convergence.
This paper focuses on an acceleration method for nonlinear fixed-point iterations.

Anderson acceleration (AA), initially introduced in~\cite{anderson1965iterative} to address partial differential equations, is one of the most popular acceleration schemes.
AA is a scheme mixing history points where the mixing coefficients are computed by solving a least-squares (LS) problem.
It can be regarded as a multisecant quasi-Newton method whose approximate Jacobian inverse satisfies a multisecant equation~\cite{fang2009two}.
AA is known for its simplicity and significantly improved convergence performance. 
It has wide applications in fields such as electronic structure calculations~\cite{banerjee2016periodic,fang2009two}, fluid dynamics~\cite{Lott2012}, geometrical optimization~\cite{Peng2018}, and more recently machine learning~\cite{geist2018anderson,wang2021asymptotic}.

The classical AA method takes an AA step in each iteration.
There are different variants of AA such as restarted AA~\cite{ouyang2020anderson},
type-I AA~\cite{fang2009two, zhang2020globally}, and
EDIIS~\cite{chen2019convergence}. 
Among these alternatives, a simple method that applies AA at periodic intervals rather than every iteration draws our attention.
Contrary to the belief that a reduced frequency of AA would degrade performance, this technique has been proven to enhance both the efficiency and robustness of AA. This method was first introduced to address large-scale linear systems within the classical Jacobi fixed-point iteration 
framework for electronic structure calculations, termed the Alternating Anderson-Jacobi (AAJ) method in~\cite{pratapa2016anderson}. 
It has been shown to significantly outperform both the GMRES and Anderson-accelerated Jacobi methods. 
Subsequently, AAJ was generalized to include preconditioning, known as the Alternating Anderson-Richardson (AAR) method. It 
outperforms classical preconditioned Krylov solvers in terms of efficiency and scalability~\cite{suryanarayana2019alternating}. 
In nonlinear scenarios,~\cite{banerjee2016periodic} uses this strategy to accelerate self-consistent field iterations,
and \cite{han2023riemannian} applied a similar strategy on Riemannian optimization.
On the theoretical side,~\cite{lupo2019convergence} showed the equivalence of AAR to GMRES. 
However, no theoretical convergence analysis has been provided  for  this alternating approach when solving nonlinear fixed-point problems, which is the primary focus of this work.

We consider a specific variant of this periodically alternating method, referred to as AAP, which takes $m$ Picard iterations between each AA step and each AA step  mixes  $m+1$ history points.
This way, the algorithm is restarted after each AA step.
We refer to the method AAP($m$) in this paper when the number of Picard iterations $m$ needs to be specified.
A detailed description of AAP is given in \Cref{algo:AAP}.
This method takes advantage of the efficiency of AA to improve the convergence, while maintaining the simplicity of the Picard iterations. 

Solving \eqref{fixedpoint} is equivalent to finding the root of the residual function:
\begin{equation}
    0 = f(\x) := g(\x) - \x .
\end{equation}
Newton-type methods are fundamental in this context due to their characteristic fast convergence,
achieved by utilizing Jacobian information~\cite{nocedal1999numerical}.
One such method, Newton-GMRES \cite{choquet1993some,an2007globally}, iteratively updates the solution by
solving the root of the linear approximation of $f$ using GMRES ~\cite{knoll2004jacobian}.
\edit{In this work, we investigate the connection between AAP and other solvers for nonlinear systems and use the connections to prove the local $q$-linear convergence property of AAP.
Specifically, we first establish the equivalence between AAP and a multisecant-GMRES method, which iteratively uses GMRES to find the root of a multisecant linear approximation of $f$.
Based on the equivalence between AAP and multisecant-GMRES, we connect AAP to the Newton-GMRES method by showing that the multisecant matrix, the approximate Jacobian inverse, the search direction and the optimization gain of AAP (see Section~\ref{sec:review} and Equation \eqref{equ:thetaaap} for relevant discussions and definition) all converge to their counterparts in the Newton-GMRES method as the residual approaches zero.
By expressing AAP as a multisecant-GMRES method, we can bound the residual norm at the current AAP iteration by a linear and a quadratic term in terms of the residual norm at the previous iteration. The convergence of AAP to Newton-GMRES then allows us to control the quadratic term when the residual is small, which then leads to the local $q$-linear convergence property of AAP. Further, the convergence of the optimization gain in AAP to the Newton-GMRES counterpart gives an upper bound on the asymptotic $q$-linear convergence factor of AAP.
}

This paper is organized as follows. Section \ref{sec:algoandbackground} details the AAP algorithm and related work. Section \ref{sec:aapequivalence} demonstrates that, at each global iteration \( t \), AAP(\( m \)) effectively solves a multisecant linear system 
using the GMRES(\( m \)) method, followed by one additional linearized Picard iteration.
\edit{
Based on the results in Section~\ref{sec:aapequivalence},
Section~\ref{sec:relation} establishes the connection between AAP and the Newton-GMRES method.
In Section~\ref{sec:conv}, by using the relation between AAP, multisecant-GMRES, and Newton-GMRES established in Sections~\ref{sec:aapequivalence} and \ref{sec:relation}, the local $q$-linear convergence property of AAP is proved and an upper bound on the asymptotic $q$-linear convergence factor is provided.}
Numerical examples are provided in Section~\ref{sec:numerical-results}.
\Cref{sec:boundsing} discusses an assumption related to the convergence of AAP.

\label{sec:notation}
\subsection{Notation}
The vector norm is denoted by \(\|\cdot\|\) and refers to the Euclidean norm (2-norm). The matrix norm is also denoted by \(\|\cdot\|\) and refers to the induced 2-norm, which is the largest singular value. 
A sequence of matrices \(\bfA_k \rightarrow \bfA\) means \(\|\bfA_k - \bfA\| \rightarrow 0\).
In this paper,
a sequence of matrices \(\bfA_k \rightarrow \widetilde{\bfA}_k\), where $\widetilde{\bfA}_k$ is another sequence of matrices,  means \(\|\bfA_k - \widetilde{\bfA}_k\| \rightarrow 0\);
we refer to this as the convergence of \(\bfA_k \) to \( \widetilde{\bfA}_k\).
The Frobenius norm of the matrix is denoted as $\|\cdot\|_F$.
The identity matrix is denoted by $\bfI$.
For better presentation in this paper, 
$\bfA^{\ell}$ represents a superscript on matrix $\bfA$;
$\bfA^{(\ell)}$ refers to the matrix $\bfA$ raised to the power $\ell$. 
$\bfA^{-1}$ denotes the inverse of matrix $\bfA$.
$\bfA^{\dagger}$  denotes the left pseudoinverse of matrix $\bfA$.
The $i$th largest singular value of $\bfA$ is denoted as $\sigma_{i}(\bfA)$ , and $\sigma_{\text{min}}(\bfA)$ is the smallest singular value of $\bfA$.
The condition number of $\bfA$ is denoted as $\cond(\bfA)$.
The subspace spanned by the columns of matrix $\bfA$ is denoted as $\spans(\bfA)$.

For a given matrix \(\bfA\) and vector \(\b\), the \(n\)-th Krylov subspace is defined as:
\[
\mathcal{K}_n(\bfA, \b) = \mathrm{span}\{\b, \bfA\b, \bfA^2\b, \ldots, \bfA^{n-1}\b\}.
\]
Krylov subspaces are  shift invariant, i.e., for any $\nu \in\bbR$,
$\mathcal{K}_n(\bfA-\nu \bfI, \b)  =
\mathcal{K}_n(\bfA, \b) $.
The Generalized Minimal Residual Method (GMRES) 
is one of the most popular Krylov subspace methods
and solves the linear systems $\bfA\x=\b$ by minimizing the residual within a Krylov subspace at iteration $n$:
\[
\min_{\x \in \mathcal{K}_n(\bfA, \b)} \|\bfA\x - \b\|.
\]

The following assumptions will be used in this paper:

\begin{assumption}
\label{assumption:gaap}
The following properties are assumed for the operator $g$:
    \begin{enumerate}[label=(\alph*).]
        \item $
        \|g(\x) - g(\y)\| \leq \contraction \|\x  - \y\|, \quad \forall\,\,\x, \y \in \mathbb{R}^d
        $ with $\contraction\leq 1 $.%
        \footnote{\edit{We note that the results in Sections~\ref{sec:aapequivalence} and \ref{sec:relation} still hold when $g$ is expansive, i.e., $\contraction>1$, but with a larger, $\contraction$-dependent constant in some of the estimates.}}
        \item The Jacobian of $g$, denoted as $g'$, is Lipschitz continuous with constant $\gamma>0$,
        \[
        \|g'(\x) - g'(\y)\| \leq \gamma \|\x  - \y\|.
        \]
    \end{enumerate}
\end{assumption}

\section{Algorithm and related work}
\label{sec:algoandbackground}

This section provides the details of the AAP algorithm, along with the necessary preliminary concepts and assumptions. In addition, existing work on the convergence analysis of the classical AA is discussed. 

\subsection{The AAP algorithm}

At global iteration $t$, AAP first takes $m$ Picard iterations from current iterate $\x_t$ to generate new points $\{\x_{t}^1, \cdots, \x_{t}^m\}$ and compute their residuals.
AAP next computes the mixing coefficients by solving a constrained LS problem based on the residuals of the history points $\{\x_t, \x_{t}^1, \cdots, \x_{t}^m\}$, and then updates the iterate to $\x_{t+1}$ by mixing the fixed-point values of these points.
See \Cref{algo:AAP} for details. 
We denote the mixing coefficients at global iteration $t$ as $\bfalpha_t = \{ \alpha_t^0,\alpha_t^1,..., \alpha_t^m \}\in \bbR^{m+1}$, and denote the residual $f(\x_t)$ as either $\f_t$ or $\f_t^0$, depending on the context.
When relevant, we refer to this algorithm as AAP($m$), in which $m+1$ Picard iterations are performed in each global iteration.

\begin{algorithm}
\caption{Alternating Anderson-Picard (AAP) Method}
\label{algo:AAP}
\begin{algorithmic}[1] 
\State \textbf{Initialization:} $\x_0 \in \mathbb{R}^d$, \textit{number of Picard steps} $m\leq d$, \textit{damping ratio} $\damping\geq 0$
\For{\textit{global iteration} $t=0$ \textbf{to} $T$}
    \State \texttt{/* Step 1: takes Picard iteration to generate $m$ points and compute their residuals */}
    \State $\x_t^{0} \leftarrow \x_t$
    \For{$\ell=1$ \textbf{to} $m$}
        \State $\x_{t}^{\ell} \leftarrow g(\x_{t}^{\ell-1})$
        \State $\f_t^{\ell-1} \leftarrow \x_{t}^{\ell}-\x_{t}^{\ell-1}$ 
    \EndFor
    \State
    $\f_t^{m} \leftarrow g(\x_{t}^{m}) - \x_{t}^{m}
    $
    \State \texttt{/* Step 2: Take one Anderson Acceleration step */}
    \State Solve 
    \begin{equation}
    \label{equ:constrainedLS}
        \bfalpha_t 
        \leftarrow
        \argmin_{\alpha \in \mathbb{R}^{m+1}} \big\| \sum_{\ell=0}^{m} \alpha^{\ell} \f_t^{\ell} \big\|^2
        \quad \text{s.t.} \quad 
        \sum_{\ell=0}^{m} \alpha^{\ell} = 1 
    \end{equation}
    \State Update
    \begin{equation}\label{equ:AAPupdate}
        \x_{t+1} \leftarrow (1-\damping) \sum_{\ell=0}^{m} \alpha_t^{\ell} \x_t^{\ell} + \damping \sum_{\ell=0}^{m} \alpha_t^{\ell} g(\x_t^{\ell})
    \end{equation}
\EndFor
\end{algorithmic}
\end{algorithm}

Here, the damping ratio $\beta_t$ balances the weighted average of history points and the weighted average of history residuals, i.e.,
    \begin{equation}
    \label{equ:dampinaap}
            \begin{aligned}
        \x_{t+1} = (1-\damping) \sum_{\ell=0}^{m} \alpha_t^{\ell} \x_t^{\ell} + \damping \sum_{\ell=0}^{m} \alpha_t^{\ell} g(\x_t^{\ell}) =   \sum_{\ell=0}^{m} \alpha_t^{\ell}  \x_t^{\ell} + \damping \sum_{\ell=0}^{m} \alpha_t^{\ell} \f_t^{\ell}.
    \end{aligned}
    \end{equation}
    
Let $\St := [\s_t^0, \dots, \s_t^{m-1}]\in\mathbb{R}^{d\times m}$ and $\Yt := [\y_t^0, \dots, \y_t^{m-1}]\in\mathbb{R}^{d\times m}$ with $\s_t^{\ell} =\x_t^{\ell+1} - \x_t^{\ell}$ and $\y_t^{\ell} = \f^{\ell+1}_t -   \f^{\ell}_t$, for $\ell=0,\dots,m-1$.
When $\Yt$ is of full column rank, the constrained LS problem \eqref{equ:constrainedLS} can be written as an unconstrained LS problem, i.e.,
\begin{equation}
\label{equ:Yproblems}
 \z_t := \argmin_{\z \in \mathbb{R}^{m}} \| \Yt \z - \f_t\|^2  (= \Yt^{\dagger} \f_t), 
\end{equation}
and $\bfalpha_t$ can be recovered from $\z_t$ by setting $\alpha_t^0 = 1+z_t^0$, $\alpha_t^{\ell} = z_t^{\ell} - z_t^{\ell-1}$ for $0< \ell<m$, and $\alpha_t^m = -z_t^{m-1}$. 
This transformation leads to an equivalent multisecant quasi-Newton formulation of AAP, which updates $ \x_{t+1} $ based on an approximate Jacobian inverse $\Ht$. Specifically,
\begin{gather}
    \x_{t+1} \leftarrow \x_t - \Ht   \f_t, \\
    \begin{aligned}
        \text{where} \quad \quad & \Ht \leftarrow   -\damping \bfI + (\St+ \damping \Yt)(\Yt^T \Yt)^{-1} \Yt^T. 
        \label{equ:Hinv}
    \end{aligned}
\end{gather}
It is straightforward to verify that $\Ht$ satisfies the inverse multisecant equation $\Ht \Yt = \St$.

The following assumptions regarding the iterations generated by AAP are important for this paper. 

\begin{assumption}
\label{assu:syfullrank}
For all \(t \geq 0\), both \(\St\) and \(\Yt\) have full column rank.
\end{assumption}

\Cref{assu:syfullrank} is fundamental for all the results presented in this paper and will be assumed without further mention. This assumption is reasonable as we can adaptively adjust $m$ in each global iteration and stop the Picard iterations when the columns of $\St$ or $\Yt$ become linearly independent. Alternatively, we can remove the linearly dependent columns directly, though this approach may affect the analysis presented in this paper.

\begin{assumption}[Uniform Boundedness of $\cond(\St)$]
\label{assm:condst}
There exist constants  $T>0$ and $ M>0$ such that $\cond(\St)\leq M$ for all $t>T$.
\end{assumption}
\Cref{assm:condst} is a general assumption in the convergence analysis of AA (see Section~\ref{sec:review}).
We will show that $\St$ converges to a Krylov matrix in \Cref{sec:krylovbase}, 
and discuss this assumption in more detail in \Cref{sec:boundsing}.

\subsection{Multisecant matrices}
\label{sec:multi}

In this paper, we define the multisecant matrix \(\Bt\) as a matrix that satisfies the multisecant equation \( \Bt \St = \Yt \). 
It follows from 
$\s_t^{\ell} =\x_t^{\ell+1} - \x_t^{\ell} = \f^{\ell}_t$
that
\begin{equation}
\label{equ:multisecantaap}
    \Bt \St = \Yt,
    \quad \Longleftrightarrow \quad
    \Bt \f_t^{\ell} = \f_t^{\ell+1} - \f_t^{\ell}, \quad \forall \,\, 0 \leq \ell \leq m-1.
\end{equation}
Furthermore, we define the set
\begin{equation}
   \bbB := \{\bfB\in \bbR^{d \times d} \mid \bfB \St = \Yt   \}.
\end{equation}
It is evident that \(\bbB\) is non-empty and contains invertible matrices under \Cref{assu:syfullrank}. For instance, elements of \(\bbB\) include \(\Yt \St^{\dagger}\) and \(-\bfI + (\Yt + \St)\St^{\dagger}\).
Moreover, an invertible matrix in \(\bbB\) can be constructed by taking \(\bfY \bfS^{-1}\), where \(\bfY \in \mathbb{R}^{d \times d}\) and \(\bfS \in \mathbb{R}^{d \times d}\) are arbitrary invertible matrices with the first $m$ columns being \(\Yt\) and \(\St\), respectively.
For any $\Bt\in\bbB$, we call \( \widehat{f}_t(\x) = \Bt (\x - \x_t) + f(\x_t) \)  a multisecant linear approximation of $f$ at $\x_t$, which satisfies \( \widehat{f}_t(\x_t^{\ell}) = f(\x_t^{\ell}) \) for \( 0 \leq \ell \leq m \).
At global iteration $t$, this approximation is based on the $m$ Picard iteration points generated in AAP, as seen in the definition of $\bbB$.

\subsection{Existing convergence analysis of Anderson acceleration}
\label{sec:review}

The local \edit{$r$-linear} convergence rate of AA is established in~\cite{toth2015convergence,Kelley2018}. Subsequently,~\cite{Pollock2019, evans2020proof, pollock2021anderson} demonstrated how AA improves the convergence rate over fixed-point (Picard) iterations. The state-of-the-art convergence bound consists of a linear term and a higher-order term. A simplified result from~\cite[Theorem 5.1]{pollock2021anderson} is given by
$\| f(\x_{t+1}) \| \leq \kappa \theta_t \| f(\x_t) \| + c \sqrt{1 - \theta_t^2} \| f(\x_t) \| \sum_{\ell=0}^{m} \| f(\x_{t-\ell}) \|$ 
where \(\theta_t \leq 1\) is called the \textit{optimization gain}.
This result indicates that the local convergence rate of AA, \(\kappa \theta_t\), is superior to the local convergence rate of the Picard iteration, \(\kappa\).

The assumptions to guarantee the convergence of AA include a smoothness condition on \(g\) and a uniform boundedness assumption on the coefficients \(\bfalpha\)~\cite{toth2015convergence, evans2020proof, pollock2021anderson, ouyang2024descent}. 
Alternatives to the uniform boundedness assumption on \(\bfalpha\) is to assume sufficient linear independence among the columns of $\Yt$~\cite{pollock2021anderson} and the boundedness of the condition number of \(\St\) or \(\Yt\)~\cite{ouyang2024descent}. 
The equivalence between these conditions is discussed in \cite{ouyang2024descent}.
These conditions are also directly related to the conditioning of the constrained LS problem in AA.
In \cite{pollock2023filtering}, a filtering strategy is proposed to enforce these conditions by removing nearly linearly dependent columns from the LS problem.

An important property of AA is its equivalence to GMRES on linear problems.
When $g(\x) = \bfA\x+\b$,
\cite{walker2011anderson} shows that AA with no truncation solving $\x=g(\x)$ is ``essentially equivalent'' to GMRES applied to $(\bfI-\bfA)\x = \b$, in the sense that 
the weighted sum of the history points in each AA step equals the GMRES iterate $\x_t^{\mathrm{GMRES}} $ and the AA iterate $\x_{t+1}^{\mathrm{AA}}=g\left(\x_t^{\mathrm{GMRES}}\right)$.
It is also mentioned in~\cite{walker2011anderson} that a ``restarted'' variant of AA, in which the method proceeds without truncation for $m$ steps and then is restarted, is equivalent to GMRES($m$) applied to $(\bfI-\bfA)\x = \b$ followed by a fixed-point iteration.
In~\cite{lupo2019convergence}, the equivalence between Alternating Anderson-Richardson and GMRES is established. For nonlinear problems, AA is closely related to the Nonlinear Generalized Minimal Residual Method (NGMRES)~\cite{sterck2021asymptotic}.
This work will establish the equivalence between AAP and a multisecant-GMRES method for nonlinear case.

\section{Equivalence between AAP and multisecant-GMRES}
\label{sec:aapequivalence}

This section gives the equivalence between AAP and a multisecant-GMRES method.
We \edit{first show in \Cref{theorem:aapequivalence}} that, at global iteration $t$, AAP effectively
solves a multisecant linear system, followed by one additional linearized Picard iteration.
\edit{This equivalence leads to \Cref{coro:praap}, which provides an expression of the AAP iterates in terms of updates from the multisecant-GMRES method. This expression becomes useful in the analysis in Sections~\ref{sec:relation} and \ref{sec:conv}.}

\begin{theorem}[Equivalence between AAP($m$) and multisecant-GMRES($m$)]\label{theorem:aapequivalence}
    At global iteration $t$, let $\{\x_t^\ell\}_{\ell=0}^m$ be generated by AAP($m$), then, for any $\Bt\in\bbB$,
    \begin{equation}
        \label{equ:hatp}
        \sum_{\ell=0}^m \alpha_t^{\ell} \x_t^{\ell} = \x_t - \widehat{\p}_t,
        \qquad \text{where} \quad
 \widehat{\p}_t = \argmin_{\p \in \mathcal{K}_{\E}(\Bt, \f_t)} \| \Bt\p- \f_t \|^2.
    \end{equation}
 Further, $\x_{t+1} = \widehat{g}_t(\x_t - \widehat{\p}_t)$ where $\widehat{g}_t(\x) := (1-\damping)\x + \damping[ (\bfI+\Bt) (\x - \x_t) + g(\x_t)] $  is a (damped) linear approximation of $g$.
\end{theorem}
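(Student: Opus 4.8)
The plan is to reduce both assertions to the reparametrized least-squares problem \eqref{equ:Yproblems}, and the one genuinely structural fact I need is that $\spans(\St)$ coincides with the Krylov subspace $\mathcal{K}_{m}(\Bt,\f_t)$. First I would express the weighted history average in terms of $\z_t=\Yt^{\dagger}\f_t$: substituting $\alpha_t^0=1+z_t^0$, $\alpha_t^{\ell}=z_t^{\ell}-z_t^{\ell-1}$ for $0<\ell<m$, and $\alpha_t^m=-z_t^{m-1}$ into $\sum_{\ell=0}^m\alpha_t^{\ell}\x_t^{\ell}$ and rearranging by summation-by-parts, the coefficient of each $z_t^{\ell}$ collapses to $\x_t^{\ell}-\x_t^{\ell+1}=-\s_t^{\ell}$ and the constant term is $\x_t^0=\x_t$, giving $\sum_{\ell=0}^m\alpha_t^{\ell}\x_t^{\ell}=\x_t-\St\z_t$. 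The identical computation with $\f_t^{\ell}$ in place of $\x_t^{\ell}$ (using $\f_t^{\ell}-\f_t^{\ell+1}=-\y_t^{\ell}$) yields $\sum_{\ell=0}^m\alpha_t^{\ell}\f_t^{\ell}=\f_t-\Yt\z_t$, which I will reuse for the second claim.

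Next I would identify $\St\z_t$ with $\widehat{\p}_t$. Since $\Yt=\Bt\St$ and $\St$ has full column rank, the substitution $\p=\St\z$ is a bijection of $\bbR^m$ onto $\spans(\St)$, so $\z_t=\argmin_{\z}\|\Yt\z-\f_t\|^2$ becomes $\St\z_t=\argmin_{\p\in\spans(\St)}\|\Bt\p-\f_t\|^2$, the minimizer being unique because $\Yt$ has full column rank. It then remains to show $\spans(\St)=\mathcal{K}_{m}(\Bt,\f_t)$: from \eqref{equ:multisecantaap} we have $\f_t^{\ell+1}=(\bfI+\Bt)\f_t^{\ell}$, hence $\f_t^{\ell}=(\bfI+\Bt)^{\ell}\f_t$, and since $\s_t^{\ell}=\f_t^{\ell}$ for $0\le\ell\le m-1$ this gives $\spans(\St)=\spans\{\f_t,(\bfI+\Bt)\f_t,\dots,(\bfI+\Bt)^{m-1}\f_t\}=\mathcal{K}_m(\bfI+\Bt,\f_t)=\mathcal{K}_m(\Bt,\f_t)$ by the shift invariance of Krylov subspaces. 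This proves \eqref{equ:hatp}; note that the particular choice of $\Bt\in\bbB$ is irrelevant, since any two elements of $\bbB$ agree on $\spans(\St)$ and produce the same minimization problem.

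For the second assertion I would expand \eqref{equ:AAPupdate} via $g(\x_t^{\ell})=\x_t^{\ell}+\f_t^{\ell}$, so that $\x_{t+1}=\sum_{\ell}\alpha_t^{\ell}\x_t^{\ell}+\damping\sum_{\ell}\alpha_t^{\ell}\f_t^{\ell}=(\x_t-\widehat{\p}_t)+\damping(\f_t-\Bt\widehat{\p}_t)$, using the two summation-by-parts identities together with $\Yt\z_t=\Bt\St\z_t=\Bt\widehat{\p}_t$. On the other hand, evaluating $\widehat{g}_t$ at $\x=\x_t-\widehat{\p}_t$ and using $g(\x_t)=\x_t+\f_t$ gives $\widehat{g}_t(\x_t-\widehat{\p}_t)=(1-\damping)(\x_t-\widehat{\p}_t)+\damping[(\x_t-\widehat{\p}_t)+(\f_t-\Bt\widehat{\p}_t)]$, which is the same expression, so $\x_{t+1}=\widehat{g}_t(\x_t-\widehat{\p}_t)$. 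The statement that $\widehat{g}_t$ is a damped linear approximation of $g$ is then immediate, since the undamped map $(\bfI+\Bt)(\x-\x_t)+g(\x_t)$ equals $\x+\widehat{f}_t(\x)$ with $\widehat{f}_t$ the multisecant linear approximation of $f$. The only mildly delicate points are the index bookkeeping in the summation-by-parts step and checking the well-definedness and $\Bt$-independence of $\widehat{\p}_t$; the rest is direct substitution, so I expect no serious obstacle.
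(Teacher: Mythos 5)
Your proof is correct and follows essentially the same route as the paper's: reparametrize the constrained least-squares problem via $\z$, use the multisecant relation to show the Picard residuals form a Krylov sequence, and invoke shift invariance of Krylov subspaces to identify the search space. The only cosmetic differences are that you identify $\spans(\St)=\mathcal{K}_m(\Bt,\f_t)$ directly and deduce $\widehat{\p}_t=\St\z_t$ from the bijection $\z\mapsto\St\z$ (avoiding the paper's appeal to invertible elements of $\bbB$ to cancel $\Bt$ in $\Bt\widehat{\p}_t=\Bt\St\z_t$), and that you verify the second claim by direct substitution into $\widehat{g}_t$ rather than via the affine-combination property of $\widehat{f}_t$.
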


\begin{proof}
Let $\Bt\in\bbB$ be any multisecant matrix.
By expressing $\bfalpha$ in terms of $\z$ as given in \eqref{equ:Yproblems}, 
    \[
    \begin{aligned}
        \sum_{\ell=0}^m \alpha^{\ell} \f_t^{\ell} &= \f_t^0 - z^0(\f_t^1-\f_t^0) -\cdots- z^{m-1} (\f_t^m-\f_t^{m-1}) =\f_t - \Yt \z.
    \end{aligned}    
    \]
From \eqref{equ:multisecantaap}, $\Bt \f^{\ell}_t = \f^{\ell+1}_t - \f^{\ell}_t$ for $0\leq \ell \leq m-1$.
This implies that the residual sequence $\{\f^{\ell}_t\}_{\ell=0}^{m}$ is a Krylov sequence, i.e.,
\begin{equation}
\label{equ:aapkrylovbasis}
        \f^{\ell+1}_t = (\bfI+\Bt)\f^{\ell}_t = \cdots= (\bfI+\Bt)^{(\ell+1)}\f_t\in \mathcal{K}_{\ell+2}(\bfI+\Bt, \f_t).
\end{equation}
Then, the columns of $\Yt$ 
take the form
$\y_t^{\ell}= \f^{\ell+1}_t - \f^{\ell}_t=(\bfI+\Bt)^{(\ell+1)}\f_t - (\bfI+\Bt)^{(\ell)}\f_t  = \Bt (\bfI+\Bt)^{(\ell)}\f_t$, and thus
\[
 \spans(\Yt) =\spans\{ \y_t^0, \y_t^1,\cdots, \y_t^{\E-1} \} 
=\Bt\mathcal{K}_{\E}(\bfI+\Bt, \f_t) 
=  \Bt \mathcal{K}_{\E}(\Bt, \f_t),
\]
where the last equality is due to the shift invariance property of the Krylov subspace.
Therefore, we have the equivalence between the following three   problems:
\begin{equation}
\label{equ:threeproblems}
    \min_{\bfalpha \in \mathbb{R}^{m+1},
    \atop
    \sum_{\ell=0}^{m} \alpha^{\ell} = 1} \|\sum_{\ell=0}^{m} \alpha^{\ell} \f_t^{\ell}\|^2 
\quad \Leftrightarrow \,\, \min_{\z \in \mathbb{R}^{m}} \| \Yt \z - \f_t \|^2 
\quad \Leftrightarrow \,\, \min_{\p \in \mathcal{K}_{\E}(\Bt, \f_t)} \|\Bt \p - \f_t\|^2.
\end{equation}
It is straightforward to verify that the solutions to each of the problems satisfy 
\begin{equation}    \textstyle
\sum_{\ell=0}^m \alpha_t^{\ell} \f_t^{\ell}  = \f_t-\Yt \z_t=  \f_t -\Bt\widehat{\p}_t.
\label{equ:sulutionequivalence}
\end{equation}
As $\Bt \St = \Yt$, we have  $\Bt\widehat{\p}_t = \Yt \z_t = \Bt \St\z_t $ holds for any $\Bt \in \bbB$. Since $\bbB$ contains invertible elements, we have $ \widehat{\p}_t = \St\z_t$.
The equivalence between $\bfalpha_t$ and $\z_t$ then implies $\sum_{\ell=0}^{m} \alpha_t^{\ell} \x_t^{\ell} 
= \x_t - \St \z_t = \x_t - \widehat{\p}_t$, and proves \eqref{equ:hatp}.

Furthermore, let
$\widehat{f}_t$ be the multisecant linear approximation of $f$ at $\x_t$ as
\( \widehat{f}_t(\x) = \Bt (\x - \x_t) + f(\x_t) \).
Then,  $\widehat{g}_t(\x) = (1-\damping)\x + \damping(\x+ \widehat{f}_t(\x)  )$.
In addition,
for all $0\leq \ell \leq \E$,
$g (\x_t^{\ell}) = \x_t^{\ell}+f (\x_t^{\ell})$
and $f(\x_t^{\ell}) =\widehat{f}_t(\x_t^{\ell}) $.
It follows that
\[\textstyle
\sum_{\ell=0}^{m} \alpha_t^{\ell} g (\x_t^{\ell})
 =\sum_{\ell=0}^{m} \alpha_t^{\ell} \x_t^{\ell}+ \sum_{\ell=0}^{m} \alpha_t^{\ell} \widehat{f}_t (\x_t^{\ell}) 
 =\sum_{\ell=0}^{m} \alpha_t^{\ell} \x_t^{\ell}+ \widehat{f}_t (\sum_{\ell=0}^{m} \alpha_t^{\ell} \x_t^{\ell}) ,
\]
where the last equation is due to the linearity of $\widehat{f}_t$.
Finally, we have $
\x_{t+1} = (1-\damping) \sum_{\ell=0}^{m} \alpha_t^{\ell} \x_t^{\ell} + \damping [\sum_{\ell=0}^{m} \alpha_t^{\ell} \x_t^{\ell}+ \widehat{f}_t (\sum_{\ell=0}^{m} \alpha_t^{\ell} \x_t^{\ell}) ] = \widehat{g}_t( \sum_{\ell=0}^{m} \alpha_t^{\ell} \x_t^{\ell} ) = \widehat{g}_t(\x_t - \widehat{\p}_t)$,
which completes the proof.
\end{proof}

We have shown that the LS problem \eqref{equ:constrainedLS} in AAP($m$) is equivalent to applying GMRES($m$) to the multisecant linear system $\Bt\p=\f_t$ for any $\Bt \in \bbB $.
We call $\widehat{\p}_t$ the \textit{multisecant-GMRES direction}, and $\widehat{\p}_t $ is equal to $ \St \Yt^{\dagger} \f_t$.
As shown in \cref{theorem:aapequivalence}, 
the weighted average of the history points equals the multisecant-GMRES update $\x_t - \widehat{\p}_t$.
The above theorem holds for any $\Bt\in \bbB$ because they generate the same Krylov (residual) subspaces
\begin{equation}
    \label{equ:pbp}
    \mathcal{K}_{m}(\Bt, \f_t) = \spans(\St),
\qquad
\Bt\mathcal{K}_{m}(\Bt, \f_t) = \spans(\Yt),
\quad 
\forall \, \Bt \in \bbB.
\end{equation}
We refer to \cite{greenbaum1994matrices} for the properties of
matrices that generate the same Krylov residual spaces.

The equivalence between AAP and multisecant-GMRES resembles the one given in \cite{walker2011anderson} for linear $g$. 
A similar result was also mentioned for a restarted variant of AA applied on gradient descent~\cite{ouyang2020anderson}. 
The equivalence between other variants of AA and the multisecant methods is discussed in \cite{xue2024}.
 \medskip

Let us briefly take a closer look at the damping ratio. To that end,
we define the \textit{multisecant-GMRES residual} at iteration $t$ as $\widehat{\r}_t$.
According to the equivalence between the three minimization problems in \eqref{equ:threeproblems}, we have
\begin{equation}
\label{residualequation}
  \widehat{\r}_t = \Bt \widehat{\p}_t - \f_t = \Yt\z_t-\f_t= - \sum_{\ell=0}^m \alpha_t^{\ell} \f_t^{\ell}  .
\end{equation}
The following corollary shows that the damping ratio acts as a stepsize parameter in the direction of the residual.

\begin{corollary}\label{coro:praap}
The iterate $\x_{t+1}$ generated from \eqref{equ:AAPupdate} in AAP satisfies
\begin{equation}
\x_{t+1}=    \x_t - \widehat{\p}_t - \damping \widehat{\r}_t.
\end{equation}
\end{corollary}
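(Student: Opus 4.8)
The plan is to obtain the identity by direct substitution, assembling three ingredients that are already in place: the damped rewriting of the AAP update \eqref{equ:AAPupdate} recorded in \eqref{equ:dampinaap}, the characterization of the weighted average of the history iterates from \Cref{theorem:aapequivalence}, and the formula for the multisecant-GMRES residual in \eqref{residualequation}.

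Concretely, I would start from \eqref{equ:dampinaap}, which expresses the update as
\(
\x_{t+1} = \sum_{\ell=0}^{m} \alpha_t^{\ell}\x_t^{\ell} + \damping \sum_{\ell=0}^{m} \alpha_t^{\ell}\f_t^{\ell},
\)
i.e.\ as the weighted average of the history points plus $\damping$ times the weighted average of their residuals. Into the first sum I substitute $\sum_{\ell=0}^{m}\alpha_t^{\ell}\x_t^{\ell} = \x_t - \widehat{\p}_t$ from \eqref{equ:hatp}. For the second sum I invoke \eqref{residualequation}, which gives $\widehat{\r}_t = -\sum_{\ell=0}^{m}\alpha_t^{\ell}\f_t^{\ell}$, so that $\sum_{\ell=0}^{m}\alpha_t^{\ell}\f_t^{\ell} = -\widehat{\r}_t$. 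Combining the two substitutions yields $\x_{t+1} = (\x_t - \widehat{\p}_t) - \damping\widehat{\r}_t$, which is the claim.

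I do not anticipate any genuine obstacle: the corollary is a bookkeeping consequence of \Cref{theorem:aapequivalence} together with the residual identity \eqref{residualequation}, and its real point is interpretive — it exhibits $-\widehat{\r}_t$ as a correction direction and $\damping$ as the associated step length layered on top of the multisecant-GMRES step $\x_t - \widehat{\p}_t$. The only care needed is to note that both \eqref{equ:hatp} and \eqref{residualequation} rest on \Cref{assu:syfullrank} (full column rank of $\St$ and $\Yt$, hence solvability of the reductions in \eqref{equ:threeproblems}); since that assumption is standing throughout the paper, the corollary requires no further hypotheses.
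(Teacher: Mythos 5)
Your proposal is correct and follows essentially the same route as the paper: the paper's one-line proof also starts from the damped form \eqref{equ:dampinaap} and substitutes $\sum_{\ell}\alpha_t^{\ell}\x_t^{\ell}=\x_t-\widehat{\p}_t$ from \Cref{theorem:aapequivalence} and $\sum_{\ell}\alpha_t^{\ell}\f_t^{\ell}=-\widehat{\r}_t$ from \eqref{residualequation}. Your added remark about \Cref{assu:syfullrank} being the standing hypothesis is accurate but not needed beyond what the paper already assumes.
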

\begin{proof}
    The result follows from $
    \x_{t+1}  = \sum_{\ell=0}^{m} \alpha_t^{\ell}  \x_{t}^{\ell}  + \damping     \sum_{\ell=0}^m \alpha_t^{\ell}       \f_t^{\ell} 
      = \x_t - \widehat{\p}_t - \damping \widehat{\r}_t.$
\end{proof}
When $\damping$ satisfies $\| \bfI - \damping  \Bt\| \leq 1$, the ``damped'' direction $\widehat{\p}_t + \damping \widehat{\r}_t$ could be a better solution to the multisecant linear system $\Bt\p=\f_t$ than the original direction $\widehat{\p}_t$. Indeed, in this case,
$\| \Bt (\widehat{\p}_t + \damping \widehat{\r}_t) -\f_t\|    
 = \| (\bfI - \damping  \Bt) (\Bt \widehat{\p}_t  -\f_t)  \| 
\leq  \|  \Bt \widehat{\p}_t  -\f_t \|$.

\section{Connection Between AAP and Newton-GMRES}
\label{sec:relation}

Newton's method finds the roots of a nonlinear function \( f(\x) \) by using its Jacobian information. At iteration \( t \), a linear approximation of \( f \) at \( \x_t \) is given by
\begin{equation}
\label{equ:taylor}
    f(\x) \approx \Jt (\x - \x_t) + f(\x_t),
\end{equation}
where $\Jt:= f'(\x_t)$ is the Jacobian of \( f \) at \( \x_t \).
The Newton-GMRES(\(m\)) method is a variant of the inexact-Newton method that uses GMRES(\(m\)) to approximately solve this linearized system for a search direction $\p^{\textup{N}}_t$ at each iteration as follows
        \begin{equation}
        \x_{t+1} := \x_{t} - \p^{\textup{N}}_t\quad\text{with}\quad
      \p^{\textup{N}}_t  :=   \argmin_{ \p \in \mathcal{K}_m( \Jt, \f_t ) } \|\Jt \p - \f_t  \|.
        \end{equation}
However, the Jacobian is not always available or easy to compute directly.

As shown in \Cref{coro:praap}, at iteration \( \x_t \), the search direction given by AAP is equivalent to the multisecant-GMRES direction based on $\Bt \in \bbB$ with an additional damping term. 
In this section, we establish the connection between AAP and the Newton-GMRES method by showing that, (i) when the residual $\|\f_t\|$ goes to zero as $t\to\infty$, the distance between the set of multisecant matrix $\bbB$ given by AAP and the Jacobian matrix $\Jt$ converges to zero (Section~\ref{sec:EEE}), (ii) the matrices $\St$, $\Yt$, and $\Ht$ constructed by AAP converge respectively to their counterparts in the Newton-GMRES method (Section~\ref{sec:krylovbase}), and (iii) the optimization gain $\theta_t$ in AAP converges to an analogous quantity in Newton-GMRES (Section~\ref{sec:opop}).
\edit{The results in Section~\ref{sec:EEE} are used later to show the one-step bound of the residual in \Cref{thm:residual_cvgce}, which 
leads to the $q$-linear local convergence result of AAP in \Cref{theorem:localconv}, whereas the convergence to Newton-GMRES components proved in Sections~\ref{sec:krylovbase} and \ref{sec:opop} become essential when estimating the asymptotic $q$-linear convergence factor in \Cref{cor:asymptotic-qfactor}.}

One of the key properties of AAP that enables the analysis in this section is the $m$ Picard iteration steps taken in a global iteration.
The Picard iteration steps often progress more conservatively than the acceleration step. Therefore, the multisecant matrix given by these history points could be a good approximation to the local Jacobian $\Jt$.
In fact, the following lemma provides bounds on the Picard iteration updates and residuals, which are essential in the analysis in this section.
\begin{lemma}
\label{lemma:picardpoints}
Under \Cref{assumption:gaap}(a), at global iteration $t$ of AAP, for all $0 \leq \ell \leq \E$, 
\[
\|\f^{\ell}_t\| \leq \|\f_t\|,
\quad \text{and} \quad
\| \x^{\ell}_t - \x_t\| \leq \ell\|\f_t\|.
\]
\end{lemma}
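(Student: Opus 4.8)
The plan is to prove both inequalities directly from the elementary structure of the Picard sweep in Step~1 of \Cref{algo:AAP}, with no appeal to anything beyond \Cref{assumption:gaap}(a). The first thing I would record is that every Picard residual can be written as $\f_t^{\ell} = g(\x_t^{\ell}) - \x_t^{\ell}$ for all $0 \le \ell \le \E$: for $\ell < \E$ this follows immediately from $\x_t^{\ell+1} = g(\x_t^{\ell})$ together with $\f_t^{\ell} = \x_t^{\ell+1} - \x_t^{\ell}$, and for $\ell = \E$ it is exactly the defining line $\f_t^{\E} \leftarrow g(\x_t^{\E}) - \x_t^{\E}$.

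For the residual bound I would establish the one-step monotonicity $\|\f_t^{\ell}\| \le \|\f_t^{\ell-1}\|$ for each $1 \le \ell \le \E$ and then chain it. Using $\x_t^{\ell} = g(\x_t^{\ell-1})$, rewrite $\f_t^{\ell} = g(\x_t^{\ell}) - \x_t^{\ell} = g(\x_t^{\ell}) - g(\x_t^{\ell-1})$, so that \Cref{assumption:gaap}(a) gives $\|\f_t^{\ell}\| \le \contraction\,\|\x_t^{\ell} - \x_t^{\ell-1}\| = \contraction\,\|\f_t^{\ell-1}\| \le \|\f_t^{\ell-1}\|$, where the last step uses $\contraction \le 1$. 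Iterating down to $\ell = 0$ yields $\|\f_t^{\ell}\| \le \|\f_t^{0}\| = \|\f_t\|$.

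For the iterate bound I would telescope the Picard updates: $\x_t^{\ell} - \x_t = \sum_{j=0}^{\ell-1}\big(\x_t^{j+1} - \x_t^{j}\big) = \sum_{j=0}^{\ell-1} \f_t^{j}$, and then apply the triangle inequality together with the residual bound just proved to get $\|\x_t^{\ell} - \x_t\| \le \sum_{j=0}^{\ell-1}\|\f_t^{j}\| \le \ell\,\|\f_t\|$.

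I do not expect any genuine obstacle here; the proof is a short induction plus a telescoping sum. The only points that warrant a little care are treating the endpoint $\ell = \E$ consistently (since $\f_t^{\E}$ is defined by a separate line in the algorithm, one should note the formula $\f_t^{\E} = g(\x_t^{\E}) - \x_t^{\E}$ still has the same form used in the monotonicity argument) and observing that the argument only needs $\contraction \le 1$, i.e.\ nonexpansiveness of $g$, not strict contractivity.
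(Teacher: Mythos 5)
Your proposal is correct and follows essentially the same argument as the paper: the one-step bound $\|\f_t^{\ell}\| \leq \contraction\|\f_t^{\ell-1}\| \leq \|\f_t^{\ell-1}\|$ from nonexpansiveness, chained down to $\ell=0$, followed by telescoping the Picard updates and the triangle inequality for the iterate bound. Your extra care with the endpoint $\ell=\E$ is a minor presentational refinement but not a different route.
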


\begin{proof}
Since $\x_t^{\ell} = g(\x_t^{\ell-1})$ and $\x_t^0 = \x_t$, we have $\x_t^{\ell} = \x_t^{\ell-1} + f(\x_t^{\ell-1})$
and thus $
    \|\x_t^{\ell} - \x_t\| = \|\sum_{i=0}^{\ell-1}f(\x_t^{i})\| \leq \sum_{i=0}^{\ell-1}\|f(\x_t^{i})\|$.
The non-expansive property of $g$ gives that, for $i=1,\dots,\ell$,
$$ \|f(\x_t^{i})\| = \|g(\x_t^{i}) - \x_t^{i}\| = \|g(\x_t^{i}) - g(\x_t^{i-1})\| \leq \contraction \|\x_t^{i} - \x_t^{i-1}\| = \contraction \|f(\x_t^{i-1})\|.$$ 
Since $\contraction\leq 1$, $\|\f^{\ell}_t\| \leq \|\f_t\|$ and $
    \|\x_t^{\ell} - \x_t\| \leq  \sum_{i=0}^{\ell-1} \|f(\x_t)\| \leq \ell\|\f_t\|$. 
\end{proof}
Another useful technique in the following analysis is the fundamental theorem for line integrals, i.e.,
\begin{equation}\label{eqn:line_int}\textstyle
    f(\y) - f(\x) = \int_0^1 f'(\x + r(\y-\x)) (\y-\x) \, \d r.
\end{equation}
With these results, we show in the following subsections that, as the AAP residual $\f_t$ approaches zero, \(\Bt\), \(\St\), \(\Yt\), \(\Ht\), and \(\theta_t\) in AAP converge to their counterparts in Newton-GMRES.

\subsection{Convergence of the multisecant matrix}
\label{sec:EEE}

We start the analysis by showing that the distance between the set of multisecant matrices $\bbB$ generated by AAP and the Jacobian $\Jt$ goes to zero provided that the residual approaches zero as $t\to\infty$.
Here, the distance between $\bbB$ and $\Jt$ is defined as the norm of
\begin{equation}
\label{definition:et}
    \Et := \Bt^*-\Jt,
    \quad  \text{where} \quad
     \Bt^* := \argmin_{\bfB \in \bbB } \|\bfB-\Jt\|.
\end{equation}
Because $\St$ is full column rank, the minimal-norm property~\cite{walker2011anderson} gives
\begin{equation}    \label{equ:Eaa}
\Et = (\Jt \St-\Yt)\St^\dagger.
\end{equation}
Note that $ \Jt+\Et = \Bt^*\in\bbB$.
In addition, when $g$ is linear, $\Jt$ satisfies $\Jt \St = \Yt$ and thus $\| \Et\|=0$.
An upper bound on $\|\Et\|$ for the nonlinear case is provided in the following theorem.

\begin{theorem}\label{theorem:etbound}    
Under \Cref{assumption:gaap},    
\begin{equation}
\label{equ:etbound}
        \| \Et\| \leq \gamma \E^{\frac{3}{2}} \cond(\St) \|\f_t\|.
    \end{equation}
Furthermore, if \Cref{assm:condst} holds, 
then 
\begin{equation}
\label{equ:etbound2}
    \| \Et\| \leq C_E\|\f_t\|, \quad \quad \forall\,\,t>T,
\end{equation}
where $C_E>0$ is a constant that is independent of $t$.
\end{theorem}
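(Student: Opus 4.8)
The plan is to start from the closed form $\Et = (\Jt\St - \Yt)\St^\dagger$ recorded in \eqref{equ:Eaa} and to split the estimate into two independent pieces: a bound on $\|\Jt\St - \Yt\|$, which I expect to be $\bigO(\|\f_t\|^2)$ because the Picard points $\x_t^0,\dots,\x_t^m$ cluster near $\x_t$ and $\Jt$ is the exact Jacobian there, and a bound on $\|\St^\dagger\|$, which I expect to scale like $\cond(\St)/\|\f_t\|$. Multiplying the two produces the claimed first estimate $\gamma\E^{3/2}\cond(\St)\|\f_t\|$.

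For the first piece, recall that $\s_t^\ell = \f_t^\ell = f(\x_t^\ell)$ and $\y_t^\ell = f(\x_t^{\ell+1}) - f(\x_t^\ell)$, so the $\ell$-th column of $\Jt\St - \Yt$ is $\Jt\s_t^\ell - \big(f(\x_t^{\ell+1}) - f(\x_t^\ell)\big)$. I would apply the line-integral identity \eqref{eqn:line_int} with endpoints $\x_t^\ell$ and $\x_t^{\ell+1} = \x_t^\ell + \s_t^\ell$ and use $\Jt = f'(\x_t)$ to rewrite this column as $\int_0^1 \big(f'(\x_t) - f'(\x_t^\ell + r\s_t^\ell)\big)\s_t^\ell\,\d r$. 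Since $f = g - \mathrm{id}$, the map $f'$ is Lipschitz with the same constant $\gamma$ as $g'$ (\Cref{assumption:gaap}(b)), so the integrand is bounded in norm by $\gamma\big(\|\x_t - \x_t^\ell\| + r\|\s_t^\ell\|\big)\|\s_t^\ell\|$; by \Cref{lemma:picardpoints} this is at most $\gamma(\ell + r)\|\f_t\|\cdot\|\f_t\| \le \gamma\E\|\f_t\|^2$ for $0 \le \ell \le \E-1$ and $r\in[0,1]$. Hence each of the $\E$ columns has norm at most $\gamma\E\|\f_t\|^2$, and passing through the Frobenius norm gives $\|\Jt\St - \Yt\| \le \|\Jt\St - \Yt\|_F \le \sqrt{\E}\,\gamma\E\|\f_t\|^2 = \gamma\E^{3/2}\|\f_t\|^2$.

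For the second piece, I would use $\|\St^\dagger\| = 1/\sigma_{\min}(\St) = \cond(\St)/\sigma_{\max}(\St)$ together with the observation that the first column of $\St$ is $\s_t^0 = \f_t^0 = \f_t$, so $\sigma_{\max}(\St) = \|\St\| \ge \|\St\bfe_1\| = \|\f_t\|$, whence $\|\St^\dagger\| \le \cond(\St)/\|\f_t\|$. Combining the two pieces yields $\|\Et\| \le \|\Jt\St - \Yt\|\,\|\St^\dagger\| \le \gamma\E^{3/2}\|\f_t\|^2\cdot\cond(\St)/\|\f_t\| = \gamma\E^{3/2}\cond(\St)\|\f_t\|$, which is \eqref{equ:etbound}; the inequality \eqref{equ:etbound2} then follows at once by invoking \Cref{assm:condst}, taking $C_E := \gamma\E^{3/2}M$. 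I do not foresee a serious obstacle; the only point requiring a bit of care is the lower bound $\sigma_{\max}(\St) \ge \|\f_t\|$, which holds precisely because $\f_t$ itself is a column of $\St$ — this is what cancels one power of $\|\f_t\|$ and converts the $\bigO(\|\f_t\|^2)$ control on $\|\Jt\St - \Yt\|$, together with the conditioning of $\St$, into the stated first-order bound on $\|\Et\|$.
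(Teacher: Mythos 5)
Your proposal is correct and follows essentially the same route as the paper's proof: the minimal-norm formula $\Et = (\Jt\St - \Yt)\St^\dagger$, a column-wise line-integral plus Lipschitz estimate using \Cref{lemma:picardpoints}, and assembly through the Frobenius norm. The only (cosmetic) difference is where $\cond(\St)$ enters: the paper keeps the factor $\|\s_t^\ell\|$ in each column bound so that the Frobenius sum produces $\|\St\|_F \le \sqrt{\E}\,\|\St\|$ and then $\|\St\|\,\|\St^\dagger\| = \cond(\St)$, whereas you bound $\|\s_t^\ell\| \le \|\f_t\|$ immediately and recover the factor $\cond(\St)/\|\f_t\|$ from $\|\St^\dagger\|$ alone via $\sigma_{\max}(\St) \ge \|\f_t\|$ (valid because $\f_t$ is the first column of $\St$, and $\|\f_t\|>0$ under \Cref{assu:syfullrank}); both bookkeepings give the identical final constant $\gamma \E^{3/2}\cond(\St)$, and your choice $C_E = \gamma\E^{3/2}M$ matches the paper's second claim.
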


\begin{proof}
It follows from \eqref{equ:Eaa} that
\[
\|\Et \| \leq  \|\Yt - \Jt \St\| \|\St^\dagger\|.
\]
Let \(\Yt - \Jt \St = [\v^0, \v^1, \dots, \v^{m-1}] \in \mathbb{R}^{d \times m}\).
For \(0 \leq \ell \leq m-1\), 
\[
\v^{\ell} = \y^{\ell} - \Jt \s^{\ell} = f(\x_{t}^{\ell+1}) - f(\x_t^{\ell}) - f'(\x_t)(\x_{t}^{\ell+1} - \x_{t}^{\ell}).
\]
By using \eqref{eqn:line_int} and \(\s_t^{\ell} = \x_{t}^{\ell+1} - \x_{t}^{\ell} = \f_{t}^{\ell}\),
\[
\begin{aligned}
    \v^{\ell} &= \int_0^1 f'(\x^{\ell}_t + r\f^{\ell}_t) \f^{\ell}_t \, \d r - f'(\x_t)\f^{\ell}_t = \int_0^1 [f'(\x^{\ell}_t + r\f^{\ell}_t) - f'(\x_t)] \f^{\ell}_t \, \d r.
\end{aligned}
\]
Applying \(f'(\x) = g'(\x) - \bfI\) and the Lipschitz continuity of \(g'\) gives, for $r\in[0,1]$,
\[
\| f'(\x^{\ell}_t + r\f^{\ell}_t) - f'(\x_t) \| \leq \gamma \| \x^{\ell}_t + r\f^{\ell}_t - \x_t \| \leq \gamma( \| \x^{\ell}_t - \x_t \| + \|\f^{\ell}_t \|).
\]
By \Cref{lemma:picardpoints}, $
\| \x^{\ell}_t - \x_t \| + \|\f^{\ell}_t \| \leq (\ell+1) \|\f_t\| \leq \gamma m \|\f_t\|$ as $\ell < m$,
which leads to
\begin{equation}
    \label{equ:errorfj}
    \| \v^{\ell} \| \leq \int_0^1 \| f'(\x^{\ell}_t + r\f^{\ell}_t) - f'(\x_t) \| \|\f^{\ell}_t \| \, \d r \leq \gamma m \|\f_t\| \|\s_t^{\ell}\|.
\end{equation}
Using the matrix norm inequality \(\| \cdot \| \leq \| \cdot \|_F\),
\[
\| \Jt \St - \Yt \| \leq \| \Jt \St - \Yt \|_F = \sqrt{\sum_\ell \| \v^{\ell} \|^2} \leq \gamma m \|\f_t\| \sqrt{\sum_\ell \| \s_t^{\ell} \|^2} = \gamma m \|\f_t\| \|\St\|_F.
\]
Since \(\St\) has full column rank and \(\|\St\|_F \leq \sqrt{m} \|\St\|\), it follows that
\[
\|\Et \| \leq \sqrt{m} \gamma m \|\f_t\| \|\St\| \|\St^\dagger\| = \gamma m^{\frac{3}{2}} \|\f_t\| \cond(\St).
\]
The second claim is then a direct consequence of \Cref{assm:condst}.
\end{proof}

The bound given in \eqref{equ:etbound2} shows that $\|\Et\| \leq \mathcal{O}(\|\f_t\|)$, which leads to the following convergence result.

\begin{corollary}
\label{coro:btconver}
Under Assumptions~\ref{assumption:gaap} and \ref{assm:condst},
suppose that $\f_t\to 0$ as $t\to\infty$, then, as $t\to\infty$, 
\[     \Et \rightarrow 0 \quad\text{and}\quad \Bt^* \rightarrow \Jt,
    \]
    where $\Bt^* \in \bbB$ is defined in \eqref{definition:et}.
\end{corollary}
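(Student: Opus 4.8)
The proof is essentially immediate from the quantitative estimate already in hand, so the plan is simply to take limits carefully. First I would recall the second bound of \Cref{theorem:etbound}: under \Cref{assumption:gaap} and \Cref{assm:condst} there are a threshold $T$ and a constant $C_E>0$, both independent of $t$, with $\|\Et\|\le C_E\|\f_t\|$ for every $t>T$ (this is \eqref{equ:etbound2}). Since by hypothesis $\|\f_t\|\to 0$ as $t\to\infty$, the squeeze $0\le\|\Et\|\le C_E\|\f_t\|$ gives $\|\Et\|\to 0$, which by the notational convention of \Cref{sec:notation} is exactly the assertion $\Et\to 0$.

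For the second conclusion, I would unwind the definition $\Et:=\Bt^*-\Jt$ from \eqref{definition:et}, so that $\|\Bt^*-\Jt\|=\|\Et\|\to 0$. Because $\Jt=f'(\x_t)$ is itself a $t$-indexed family of matrices, this limit is to be read in the sense of ``convergence of one matrix sequence to another'' defined in \Cref{sec:notation}; with that reading, $\|\Bt^*-\Jt\|\to 0$ is precisely $\Bt^*\to\Jt$, and the proof is complete.

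There is no substantive obstacle here: the real work was done in \Cref{theorem:etbound}, and this corollary only packages the bound $\|\Et\|\le\bigO(\|\f_t\|)$ together with the assumed decay of the residual. The one point worth stating explicitly — and the only thing a careful reader might pause on — is that both ``$\Et\to 0$'' and ``$\Bt^*\to\Jt$'' concern moving targets, since the Jacobian $\Jt$ changes with $t$; they must therefore be interpreted via the convergence-of-sequences convention rather than as convergence to a fixed limit. Once that is understood, everything follows from a single application of the sandwich theorem to \eqref{equ:etbound2}.
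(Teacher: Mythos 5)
Your proposal is correct and follows exactly the paper's intended argument: the corollary is a direct consequence of the bound $\|\Et\|\le C_E\|\f_t\|$ in \eqref{equ:etbound2}, combined with $\|\f_t\|\to 0$ and the definition $\Et=\Bt^*-\Jt$, interpreted via the sequence-to-sequence convergence convention of the notation section. Nothing further is needed.
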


The convergence of $\Bt $ to $ \Jt$ suggests that the multisecant direction $\widehat{\p}_t$ could converge to the Newton-GMRES direction ${\p}^{\textup{N}}_t:=\argmin_{\p \in \mathcal{K}_{m}(\Jt, \f_t)} \| \Jt \p - \f_t \|$, which we prove later in \Cref{theorem:pconvergence}.

\subsection{Limits of $\St$, $\Yt$, and $\Ht$}
\label{sec:krylovbase}

In this section, we establish the limits of \(\St\) and \(\Yt\) as $t\to\infty$ under the assumption that \(\f_t \) approaches zero. These limits are related to a Krylov matrix spanned by \( g'(\x_t) \) and \( \f_t \), which further implies the convergence of the approximate Jacobian inverse \( \Ht \) of AAP defined in \eqref{equ:Hinv}.

We first review the Newton-GMRES method.
At iteration $\x_t$,
the Krylov matrix associated with $\mathcal{K}_m(\Jt, \f_t)$ can be written as
\begin{equation}
    \Ft :=[ \f_t,  \Jt^{(1)} \f_t, \Jt^{(2)} \f_t, \cdots, \Jt^{(\E-1)} \f_t  ] \in \bbR^{d \times \E}.
\end{equation}
The Newton-GMRES direction $\p^{\textup{N}}_t := \argmin_{ \p \in \mathcal{K}_m( \Jt, \f_t ) } \|\Jt \p - \f_t  \|$ satisfies $\Jt \p^{\textup{N}}_t $$ = \proj_{ \Jt \Ft}( \f_t)$ where $\proj_{\bfA} = \bfA\bfA^{\dagger}$ denotes the orthogonal projection onto the subspace spanned by the columns of $\bfA$.
Define 
\begin{equation}
    \Gt := [ \f_t,  g'( \x_t)^{(1)} \f_t, g'( \x_t)^{(2)} \f_t, \cdots, g'( \x_t)^{(\E-1)} \f_t  ] \in \bbR^{d \times \E}.
\end{equation}
Since $g'( \x_t) = \bfI+ \Jt$, the shift invariance of the Krylov subspace leads to $\proj_{ \Gt} = \proj_{ \Ft}$.
Therefore, $ \proj_{ \Jt \Gt} =  \proj_{ \Jt \Ft}$.
If $\Jt$ is invertible, we have
\begin{equation}
    \label{equ:newtonproj}    
 \p^{\textup{N}}_t = \Jt^{-1}\proj_{ \Jt \Gt}( \f_t).
\end{equation}
We refer to $\Jt^{-1}\proj_{ \Jt \Gt}$ as the \textit{Newton-GMRES update operator}.

As shown in \eqref{equ:aapkrylovbasis}, the AAP residuals $\{\f_t^{\ell}\}_{\ell=0}^{m-1}$ form a basis of the Krylov subspace $\mathcal{K}_m(\Bt, \f_t)$ as
$\f_t^{\ell} = (\bfI+\Bt)^{(\ell)}\f_t$.
Therefore,
\[
\St =
[ \f_t^0, \f_t^1, \cdots,  \f_t^{m-1} ]
=[ \f_t,  (\bfI+\Bt)^{(1)}\f_t, (\bfI+\Bt)^{(2)}\f_t, \cdots, (\bfI+\Bt)^{(\E-1)}\f_t  ] 
\]
is a Krylov matrix associated with $\bfI+\Bt$ and $\f_t$.
As $\f_t$ approaches zero, the matrices \(\St\), \(\Yt\), and \(\Gt\) all approach zero.
Further, the following lemma gives estimates of the distances between $\St$, $\Yt$ and their Newton-GMRES counterparts.

\begin{lemma}
    \label{lemma:convStYt}
Under Assumption \ref{assumption:gaap},
\begin{equation}
\label{equ:ytstbound}
     \|\St - \Gt\|   \leq \gamma m^{\frac{5}{2}} \|\f_t\|^2 
\qquad \text{and} \qquad
 \|\Yt - \Jt \,  \Gt\|\leq 2\gamma m^{\frac{5}{2}} \|\f_t\|^2.
\end{equation}
\end{lemma}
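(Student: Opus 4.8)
The plan is to compare the columns of $\St$ and $\Yt$ with the corresponding columns of $\Gt$ and $\Jt\Gt$ one at a time, using the Krylov structure $\St = [\f_t, (\bfI+\Bt^*)\f_t, \dots, (\bfI+\Bt^*)^{(m-1)}\f_t]$ established in \eqref{equ:aapkrylovbasis} (any fixed $\Bt^*\in\bbB$ works, and $\bfI+\Bt^* = g'(\x_t) + \Et$) against $\Gt = [\f_t, g'(\x_t)\f_t, \dots, g'(\x_t)^{(m-1)}\f_t]$. The $\ell$-th columns differ by $[(g'(\x_t)+\Et)^{(\ell)} - g'(\x_t)^{(\ell)}]\f_t$, which I would expand as a telescoping sum of $\ell$ terms, each of the form $g'(\x_t)^{(i)}\Et(g'(\x_t)+\Et)^{(j)}\f_t$ with $i+j = \ell-1$. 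Taking norms and using $\|g'(\x_t)\|\le\contraction\le 1$ and $\|\bfI+\Bt^*\| = \|g'(\x_t)+\Et\|\le \contraction + \|\Et\|$, each such term is bounded by $\|\Et\|(\contraction+\|\Et\|)^{j}\|\f_t\|\le\|\Et\|\,(1+\|\Et\|)^{m-2}\|\f_t\|$; since $\|\Et\|\to 0$ one can absorb the $(1+\|\Et\|)^{m-2}$ factor into a constant, or — cleaner for a clean bound — note that the lemma only needs the leading-order estimate and invoke \Cref{lemma:picardpoints} together with \Cref{theorem:etbound}'s bound $\|\Et\|\le\gamma m^{3/2}\cond(\St)\|\f_t\|$.

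The cleanest route, which I would actually pursue, avoids the matrix-power expansion entirely and argues columnwise by induction on $\ell$. Write $\s_t^\ell = \f_t^\ell$ and recall $\f_t^{\ell+1} = (\bfI+\Bt^*)\f_t^\ell = \f_t^\ell + \Bt^*\f_t^\ell$, while the $\Gt$-columns satisfy $g_{\ell+1} := g'(\x_t)^{(\ell+1)}\f_t = g'(\x_t)\,g_\ell$. I would bound $\|\f_t^\ell - g_\ell\|$ inductively: for the base case $\ell=0$ the difference is zero; for the inductive step, I would use a line-integral argument. Indeed $\f_t^{\ell+1} - \f_t^\ell = f(\x_t^{\ell+1}) - f(\x_t^\ell) = \int_0^1 f'(\x_t^\ell + r\f_t^\ell)\f_t^\ell\,\d r$, and $f' = g'-\bfI$, so $\f_t^{\ell+1} = g(\x_t^{\ell+1}) - g(\x_t^\ell) + \f_t^\ell - \f_t^\ell$... more precisely $\f_t^{\ell+1} = \int_0^1 g'(\x_t^\ell + r\f_t^\ell)\f_t^\ell\,\d r$. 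Comparing this with $g_{\ell+1} = g'(\x_t)g_\ell$: add and subtract $g'(\x_t)\f_t^\ell$, giving $\|\f_t^{\ell+1} - g_{\ell+1}\| \le \|\int_0^1[g'(\x_t^\ell+r\f_t^\ell) - g'(\x_t)]\f_t^\ell\,\d r\| + \|g'(\x_t)\|\,\|\f_t^\ell - g_\ell\|$. The first term is controlled exactly as in the proof of \Cref{theorem:etbound}: by Lipschitz continuity of $g'$ and \Cref{lemma:picardpoints}, $\|g'(\x_t^\ell + r\f_t^\ell) - g'(\x_t)\| \le \gamma(\|\x_t^\ell - \x_t\| + \|\f_t^\ell\|) \le \gamma(\ell+1)\|\f_t\| \le \gamma m\|\f_t\|$, and $\|\f_t^\ell\|\le\|\f_t\|$, so the first term is $\le \gamma m\|\f_t\|^2$; the second term is $\le \contraction\|\f_t^\ell - g_\ell\| \le \|\f_t^\ell - g_\ell\|$. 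Unrolling the recursion over $\ell \le m-1$ gives $\|\f_t^\ell - g_\ell\| \le \ell\gamma m\|\f_t\|^2 \le \gamma m^2\|\f_t\|^2$, hence $\|\St - \Gt\|_F^2 = \sum_{\ell=0}^{m-1}\|\f_t^\ell - g_\ell\|^2 \le m(\gamma m^2\|\f_t\|^2)^2$, so $\|\St-\Gt\| \le \|\St-\Gt\|_F \le \gamma m^{5/2}\|\f_t\|^2$, which is exactly the claimed bound.

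For the second inequality, $\Yt$ has columns $\y_t^\ell = \f_t^{\ell+1} - \f_t^\ell$ and $\Jt\Gt$ has columns $\Jt g_\ell = g_{\ell+1} - g_\ell$ (since $\Jt = g'(\x_t) - \bfI$), so $\y_t^\ell - \Jt g_\ell = (\f_t^{\ell+1} - g_{\ell+1}) - (\f_t^\ell - g_\ell)$, whence $\|\y_t^\ell - \Jt g_\ell\| \le \|\f_t^{\ell+1} - g_{\ell+1}\| + \|\f_t^\ell - g_\ell\| \le 2\gamma m^2\|\f_t\|^2$ (using the columnwise bound with index up to $m$, noting $\f_t^m$ is well-defined in the algorithm). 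Then $\|\Yt - \Jt\Gt\| \le \|\Yt - \Jt\Gt\|_F \le \sqrt{m}\cdot 2\gamma m^2\|\f_t\|^2 = 2\gamma m^{5/2}\|\f_t\|^2$.

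The main obstacle is bookkeeping rather than anything deep: one must be careful that the index $\ell$ in the line-integral bound runs up to $m$ (for the $\Yt$ estimate we need $\|\f_t^m - g_m\|$, and $g_m = g'(\x_t)^{(m)}\f_t$ is not literally a column of $\Gt$ but is still controlled by the same recursion, which extends trivially one more step), and that the Lipschitz/line-integral estimate is applied with the right base point. A secondary subtlety is justifying that the per-column recursion closes cleanly — i.e., that the accumulated error stays $O(\|\f_t\|^2)$ rather than degrading — but this is immediate because the multiplicative factor at each step is $\contraction \le 1$, so errors add rather than amplify. I would also remark that this argument makes no use of \Cref{assm:condst}: the $\|\f_t\|^2$ bound here is sharper than the $\cond(\St)$-dependent bound on $\|\Et\|$ because the line-integral perturbations are being compared against the \emph{exact} Krylov matrix $\Gt$ rather than being routed through $\St^\dagger$.
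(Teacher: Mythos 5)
Your proposal is correct and follows essentially the same argument as the paper: the paper also writes $\f_t^{\ell}=g'(\x_t)\f_t^{\ell-1}+\v^{\ell-1}$ via the line integral, bounds $\|\v^{\ell-1}\|\leq\gamma m\|\f_t\|^2$ using Lipschitz continuity of $g'$ and \Cref{lemma:picardpoints}, accumulates the per-column error to $\gamma m^{2}\|\f_t\|^{2}$ (the paper unrolls the recursion explicitly as $\bfepsilon^{\ell}=\sum_{i}\bfK^{(i)}\v^{\ell-1-i}$ rather than by induction), and assembles the matrix bounds through the Frobenius norm, with the $\Yt$ estimate obtained by the same triangle-inequality splitting. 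The differences are purely organizational, and your constants match the stated bounds.
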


\begin{proof}
For a better presentation, we omit the subscript \(t\) in this proof unless necessary. Note that \(\x^0 = \x_t\) is the starting point for the Picard iterations at the global iteration \(t\),
and $\f^{\ell} = f(\x_t^{\ell})$ for all $0\leq \ell \leq m-1$.

We first define \(\Deltat = \Gt - \St\) and write \( \Deltat = [\bfepsilon^{0}, \bfepsilon^{1}, \bfepsilon^{2}, \cdots, \bfepsilon^{m-1}]  \).
By the definitions of $\Gt$ and $\St$, $\bfepsilon^{0}=0$ and $\bfepsilon^\ell=\f^\ell - g^\prime(\x_t)^{(\ell)}\f_t$ for $\ell=0,\dots, m-1$.
We next estimate $\bfepsilon^\ell$.
By \eqref{eqn:line_int}, \(f(\x^{\ell}) - f(\x^{\ell-1}) = \int_0^1 f'(\x^{\ell-1} + r\f^{\ell-1}) \f^{\ell-1} \, \d r\) as $\x^{\ell} - \x^{\ell-1} = \f^{\ell}$.
Thus,
\[
\begin{aligned}
\f^{\ell} =    f(\x^{\ell}) &\textstyle
= \int_0^1 [\bfI + f'(\x^{\ell-1} + r\f^{\ell-1})] \f^{\ell-1} \, \d r\\
    &\textstyle
    = \int_0^1 g'(\x^{\ell-1} + r\f^{\ell-1}) \f^{\ell-1} \, \d r\\
    &\textstyle
    = g'(\x_t) \f^{\ell-1} + \int_0^1 [g'(\x^{\ell-1} + r\f^{\ell-1}) - g'(\x_t)] \f^{\ell-1} \, \d r\\
    &= \bfK \f^{\ell-1} + \v^{\ell-1},
\end{aligned}
\]
where \(\bfK := g'(\x_t)\) and  
\(\v^{\ell-1} := \int_0^1 \left[g'(\x^{\ell-1} + r\f^{\ell-1}) - g'(\x_t)\right] \f^{\ell-1} \, \d r
\).
Applying this relation iteratively gives $\f^{\ell} = \bfK^{(\ell)} \f_t + \sum_{i=0}^{\ell-1} \bfK^{(i)} \v^{\ell-1 - i}$.
Therefore, \(\bfepsilon^{\ell} = \sum_{i=0}^{\ell-1} \bfK^{(i)} \v^{\ell-1 - i}\).
Furthermore, under Assumption \ref{assumption:gaap}, \(\|\bfK\| \leq \contraction \leq 1\) and
\[
\|\v^{\ell-1}\| \leq \gamma \|\x^{\ell-1} + r\f^{\ell-1} - \x_t\| \|\f^{\ell-1}\| \leq \gamma \ell \|\f_t\| \|\f^{\ell-1}\| \leq m \gamma \|\f_t\|^2,\]
where the second inequality follows from \Cref{lemma:picardpoints} and $0\leq r\leq 1$.
Then,
  \[\textstyle\|\bfepsilon^{\ell}\| = \sum_{i=0}^{\ell-1} \bfK^{(i)} \v^{\ell-1 - i}\leq \sum_{i=0}^{\ell-1} \|\v^{\ell-1 - i}\| \leq  \ell (m \gamma \|\f_t\|^2)
  \leq m^2 \gamma \|\f_t\|^2.\]
Therefore, 
\begin{equation}\textstyle
\label{equ:Stbound}
     \| \St -  \Gt\| = \|\Deltat\| \leq \|\Deltat\|_F = \sqrt{\sum_{i=0}^{m-1} \| \bfepsilon^{i}\|^2} \leq \gamma m^{\frac{5}{2}} \|\f_t\|^2.
\end{equation}

To estimate $\|\Yt-\Jt\Gt\|$, we use the relations established above and write the $\ell$-th column of $\Yt$ as \[
        \y_t^{\ell} = \f^{\ell+1} - \f^{\ell} = \bfK^{(\ell+1)} \f_t + \bfepsilon^{\ell+1} - \bfK^{(\ell)} \f_t - \bfepsilon^{\ell} = \Jt 
 \bfK^{(\ell)} \f_t + \bfepsilon^{\ell+1} - \bfepsilon^{\ell},\] 
where the last equality is due to 
\(\bfK = g'(\x_t) = \bfI+ f'(\x_t) = \bfI+\Jt\).
Then, we write $\Yt = \Jt \Gt + \widetilde{\bfDelta}_t - \Deltat$ where
\( \widetilde{\bfDelta}_t := [\bfepsilon^{1}, \bfepsilon^{2}, \bfepsilon^{3}, \cdots, \bfepsilon^{m}] \in \mathbb{R}^{d \times m} \).
Following a similar approach as above, one can obtain $\|\widetilde{\bfDelta}_t\|\leq\gamma m^{\frac{5}{2}} \|\f_t\|^2$, and thus
\begin{equation}
\label{equ:ytbound}
    \left\| \Yt - \Jt \Gt \right\| \leq \|\widetilde{\bfDelta}_t\| + \|\Deltat\| \leq  
 2 \gamma m^{\frac{5}{2}} \|\f_t\|^2.
\end{equation}
\end{proof}

When \(\f_t\) approaches 0, despite \(\St\) and \(\Yt\) converging to 0, the above lemma shows that \(\frac{1}{\|\f_t\|}\St \rightarrow \frac{1}{\|\f_t\|}\Gt\) and \(\frac{1}{\|\f_t\|}\Yt \rightarrow \frac{1}{\|\f_t\|}\Jt \Gt\).
We note that, when $g$ is linear, i.e., $\gamma=0$, for all $t$,
\begin{equation}
    \label{equ:linearcase}
\St = \Gt,
\qquad \text{and}\qquad
\Yt = \Jt \Gt.
\end{equation}

\medskip
Now we are ready to give the convergence of the  approximate Jacobian inverse \(\Ht\) in AAP.
As given in \eqref{equ:Hinv},
$\Ht =  \St\Yt^{\dagger} - \damping (\bfI - \proj_{ \Yt})$ where $\damping$ is the damping ratio.
The definitions of $\widehat{\p}_t$ and $\widehat{\r}_t$ in \eqref{equ:hatp} and \eqref{residualequation} lead to $\widehat{\p}_t = \St\Yt^{\dagger} \f_t$ and $\widehat{\r}_t  = - \damping (\bfI - \proj_{ \Yt})\f_t$, respectively.
Thus, $\St\Yt^{\dagger}$ corresponds to the multisecant-GMRES update operator,
and $\bfI - \proj_{ \Yt}$ corresponds to the extra linearized fixed-point iteration in AAP.
The following corollary gives the convergence of $\St\Yt^{\dagger}$ to the Newton-GMRES update operator defined in \eqref{equ:newtonproj},
as well as the convergence of $\Ht$.

\begin{assumption}
\label{assum:ytgt}
There exist constants $T>0$ and $M>0$ such that, for all $t\geq T$, $\Jt$ is invertible, 
$\sigma_{\min}(  \Yt  ) \geq M \|\f_t\|$,
and $ \Jt \,  \Gt$ has full column rank.
\end{assumption}

\begin{theorem}\label{thm:Hcvgce}
Under Assumptions~\ref{assumption:gaap} and \ref{assum:ytgt},
when $\f_t$ goes to 0,
\begin{equation}
    \St\Yt^{\dagger} \rightarrow  \Jt^{-1}\proj_{ \Jt \Gt},
    \qquad
    \Ht \rightarrow \Jt^{-1}\proj_{ \Jt \Gt} - \damping( \bfI - \proj_{ \Jt \Gt}).
\end{equation}    
\end{theorem}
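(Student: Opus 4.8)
The plan is to reduce the statement to two perturbation estimates and then assemble them. First I would rewrite the target operators: since $\Jt$ is invertible for $t\geq T$ (Assumption~\ref{assum:ytgt}), $\proj_{\Jt\Gt} = \Jt\Gt(\Jt\Gt)^{\dagger}$, so $\Jt^{-1}\proj_{\Jt\Gt} = \Gt(\Jt\Gt)^{\dagger}$. Hence the first claim is exactly $\St\Yt^{\dagger}\to\Gt(\Jt\Gt)^{\dagger}$, and, because $\Ht = \St\Yt^{\dagger} - \damping(\bfI - \proj_{\Yt})$, the second claim follows from the first together with $\proj_{\Yt}\to\proj_{\Jt\Gt}$ and the triangle inequality. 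Before estimating, I would record the relevant scales: by \Cref{lemma:picardpoints} and $\|g'(\x_t)\|\leq\contraction\leq 1$ (so $\|\Jt\|\leq 2$), the columns of $\St,\Gt,\Yt,\Jt\Gt$ all have norm $\mathcal{O}(\|\f_t\|)$, hence $\|\St\|,\|\Gt\|,\|\Yt\|,\|\Jt\Gt\| = \mathcal{O}(\|\f_t\|)$; by \Cref{lemma:convStYt}, $\|\St-\Gt\|$ and $\|\Yt-\Jt\Gt\|$ are $\mathcal{O}(\|\f_t\|^2)$; by Assumption~\ref{assum:ytgt}, $\|\Yt^{\dagger}\| = 1/\sigma_{\min}(\Yt)\leq 1/(M\|\f_t\|)$; and by Weyl's inequality, $\sigma_{\min}(\Jt\Gt)\geq\sigma_{\min}(\Yt)-\|\Yt-\Jt\Gt\|\geq \tfrac{M}{2}\|\f_t\|$ for all large $t$, so $\|(\Jt\Gt)^{\dagger}\|\leq 2/(M\|\f_t\|)$.

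Next I would carry out the main estimate by a telescoping decomposition using $\Yt^{\dagger} = (\Yt^T\Yt)^{-1}\Yt^T$, writing $\St\Yt^{\dagger} - \Gt(\Jt\Gt)^{\dagger}$ as the sum of $(\St-\Gt)\Yt^{\dagger}$, the middle term $\Gt[(\Yt^T\Yt)^{-1} - ((\Jt\Gt)^T\Jt\Gt)^{-1}]\Yt^T$, and $\Gt((\Jt\Gt)^T\Jt\Gt)^{-1}(\Yt-\Jt\Gt)^T$. The first term is $\mathcal{O}(\|\f_t\|^2)\cdot\mathcal{O}(\|\f_t\|^{-1}) = \mathcal{O}(\|\f_t\|)$ and the last is $\mathcal{O}(\|\f_t\|)\cdot\mathcal{O}(\|\f_t\|^{-2})\cdot\mathcal{O}(\|\f_t\|^2)=\mathcal{O}(\|\f_t\|)$. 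For the middle term I would apply the identity $A^{-1}-B^{-1} = A^{-1}(B-A)B^{-1}$ with $A=\Yt^T\Yt$ and $B=(\Jt\Gt)^T\Jt\Gt$, noting that $B-A = (\Jt\Gt)^T(\Jt\Gt-\Yt) + (\Jt\Gt-\Yt)^T\Yt$ is $\mathcal{O}(\|\f_t\|)\cdot\mathcal{O}(\|\f_t\|^2) = \mathcal{O}(\|\f_t\|^3)$; hence the middle term is $\mathcal{O}(\|\f_t\|)\cdot\mathcal{O}(\|\f_t\|^{-2})\cdot\mathcal{O}(\|\f_t\|^3)\cdot\mathcal{O}(\|\f_t\|^{-2})\cdot\mathcal{O}(\|\f_t\|) = \mathcal{O}(\|\f_t\|)$. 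Therefore $\|\St\Yt^{\dagger}-\Gt(\Jt\Gt)^{\dagger}\| = \mathcal{O}(\|\f_t\|)\to 0$.

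Then I would run the identical telescoping argument with the pair $(\Yt,\Jt\Gt)$ playing the roles of $(\St,\Gt)$ — legitimate because $\Yt$ and $\Jt\Gt$ are both $\mathcal{O}(\|\f_t\|)$ with $\sigma_{\min}$ of order $\|\f_t\|$ and $\|\Yt-\Jt\Gt\| = \mathcal{O}(\|\f_t\|^2)$ — to obtain $\|\proj_{\Yt}-\proj_{\Jt\Gt}\| = \|\Yt\Yt^{\dagger} - (\Jt\Gt)(\Jt\Gt)^{\dagger}\| = \mathcal{O}(\|\f_t\|)\to 0$. Combining the two estimates gives $\|\Ht - \bigl(\Jt^{-1}\proj_{\Jt\Gt}-\damping(\bfI-\proj_{\Jt\Gt})\bigr)\| \leq \|\St\Yt^{\dagger}-\Jt^{-1}\proj_{\Jt\Gt}\| + \damping\|\proj_{\Yt}-\proj_{\Jt\Gt}\|\to 0$, which finishes the proof.

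The main obstacle is that all of $\St,\Yt,\Gt,\Jt\Gt$ collapse to the zero matrix as $\f_t\to 0$, so one cannot simply invoke continuity of the pseudoinverse or of the orthogonal projector at a fixed matrix. What makes the argument work is that Assumption~\ref{assum:ytgt} pins $\sigma_{\min}(\Yt)$ — and hence, via Weyl, $\sigma_{\min}(\Jt\Gt)$ — to exactly the $\|\f_t\|$ scale of the matrices themselves, while \Cref{lemma:convStYt} makes the perturbations $\St-\Gt$ and $\Yt-\Jt\Gt$ one order smaller, namely $\mathcal{O}(\|\f_t\|^2)$. The delicate bookkeeping point is that the induced Gram-matrix perturbation $\Yt^T\Yt - (\Jt\Gt)^T\Jt\Gt$ is $\mathcal{O}(\|\f_t\|^3)$ rather than merely $\mathcal{O}(\|\f_t\|^2)$; this extra power is precisely what absorbs the two factors $\|(\cdot)^{-1}\| = \mathcal{O}(\|\f_t\|^{-2})$ and still leaves a vanishing bound.
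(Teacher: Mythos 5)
Your proposal is correct, and its skeleton matches the paper's: both arguments hinge on the observation that Assumption~\ref{assum:ytgt} pins $\sigma_{\min}(\Yt)$ to the scale $\|\f_t\|$, that a Weyl-type bound then gives $\sigma_{\min}(\Jt\Gt)\gtrsim\|\f_t\|$, and that \Cref{lemma:convStYt} makes the perturbations $\St-\Gt$ and $\Yt-\Jt\Gt$ one order smaller, $\mathcal{O}(\|\f_t\|^2)$; both then pass from $\St\Yt^{\dagger}\to\Gt(\Jt\Gt)^{\dagger}=\Jt^{-1}\proj_{\Jt\Gt}$ and $\proj_{\Yt}\to\proj_{\Jt\Gt}$ to the statement about $\Ht$. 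Where you differ is in how the pseudoinverse perturbation is controlled: the paper invokes Stewart's theorem, $\|\Yt^{\dagger}-(\Jt\Gt)^{\dagger}\|\leq\sqrt{2}\,\|\Yt^{\dagger}\|\,\|(\Jt\Gt)^{\dagger}\|\,\|\Yt-\Jt\Gt\|$, and then works with the normalized matrices $\tfrac{1}{\|\f_t\|}\St$, $\tfrac{1}{\|\f_t\|}\Yt$, $\tfrac{1}{\|\f_t\|}\Jt\Gt$ so that products of convergent, uniformly bounded factors converge; you instead write $\Yt^{\dagger}=(\Yt^T\Yt)^{-1}\Yt^T$, telescope the difference of products into three terms, and handle the Gram-matrix term with the identity $A^{-1}-B^{-1}=A^{-1}(B-A)B^{-1}$, correctly exploiting that $\Yt^T\Yt-(\Jt\Gt)^T\Jt\Gt=\mathcal{O}(\|\f_t\|^3)$ absorbs the two $\mathcal{O}(\|\f_t\|^{-2})$ inverse factors. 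Your route is entirely elementary and self-contained (no external perturbation theorem) and delivers an explicit $\mathcal{O}(\|\f_t\|)$ rate for both $\St\Yt^{\dagger}-\Jt^{-1}\proj_{\Jt\Gt}$ and $\proj_{\Yt}-\proj_{\Jt\Gt}$, which is in fact what the paper later needs in \Cref{theorem:gain}; the paper's route is shorter and reuses the cited bound \eqref{equ:ytinvbound} for exactly that later purpose. The only bookkeeping prerequisites you should state explicitly are the full-column-rank facts making $(\Yt^T\Yt)^{-1}$ and $((\Jt\Gt)^T\Jt\Gt)^{-1}$ well defined (Assumptions~\ref{assu:syfullrank} and \ref{assum:ytgt}) and the norm bounds $\|\St\|,\|\Gt\|,\|\Yt\|,\|\Jt\Gt\|=\mathcal{O}(\|\f_t\|)$ from \Cref{lemma:picardpoints} and $\|g'\|\leq\contraction\leq1$, which you do note.
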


\begin{proof}
 When $\f_t$ goes to 0, \Cref{lemma:convStYt} gives
 $ \Yt  \rightarrow  \Jt \,  \Gt$.
Because both $\Yt$ and $\Jt \,  \Gt$ have full column rank, it follows from~\cite[Theorem~3.4]{stewart1977perturbation} that 
\begin{equation}
    \label{equ:ytinvbound}
    \|  \Yt ^{\dagger} - (\Jt \,  \Gt)^{\dagger}\| \leq \sqrt{2} \|  \Yt^{\dagger} \|\,\| (\Jt \,  \Gt)^{\dagger}\|\, \left\| \Yt  - \Jt \,  \Gt \right\|.
\end{equation}
Furthermore, $\|  \Yt^{\dagger}\| = \frac{1}{ \sigma_{\min}(  \Yt  )} \leq \frac{1}{M\|\f_t\|}$ and, for sufficiently small $\|\f_t\|$,
\begin{equation}
\label{equ:deltaM}
    \| ( \Jt \,  \Gt )^{\dagger}\| =  \frac{1}{\sigma_{\min}( \Jt \,  \Gt  )} \leq  \frac{1}{\sigma_{\min}( \Yt  )-\|\Yt - \Jt\Gt \|} \leq \tau \frac{1}{ M\|\f_t\|},
\end{equation}
where $\tau<1$ is a constant and the first inequality follows from \cite[Corollary 2.4.4]{golub2013matrix} and the estimate in \Cref{lemma:convStYt} on $\|\Yt - \Jt\Gt \|$.
With these estimates, the RHS of \eqref{equ:ytinvbound} is uniformly bounded and thus, as $\|\f_t\|\to0$,
\begin{equation} \textstyle
    \label{equ:ytinvboundscaked}
    \|  (\frac{1}{ \|\f_t\| }\Yt) ^{\dagger} - (\frac{1}{ \|\f_t\| }\Jt \,  \Gt)^{\dagger}\| = \|\f_t\| \|  \Yt ^{\dagger} - (\Jt \,  \Gt)^{\dagger}\|\to 0.
\end{equation}
Therefore, since $\Jt $ is invertible, $\Gt = \Jt^{-1} \Jt \Gt$, and then
\[ \textstyle
    \St\Yt^{\dagger} 
= (\frac{1}{ \|\f_t\| }\St) (\frac{1}{ \|\f_t\| } \Yt )^{\dagger}
\quad \rightarrow  \quad
\Jt^{-1} (\frac{1}{ \|\f_t\| } \Jt \,  \Gt) (\frac{1}{ \|\f_t\| } \Jt \,  \Gt)^{\dagger} =
\Jt^{-1} \proj_{ \Jt \Gt}.
\]
Further,
$\proj_{ \Yt } = \Yt \Yt ^{\dagger}  \rightarrow  (\Jt \Gt)(\Jt \Gt)^{\dagger} =\proj_{ \Jt \Gt} $.
It follows that
\[
\Ht =  \St\Yt^{\dagger} - \damping (\bfI - \proj_{ \Yt})
\quad \rightarrow  \quad
\Jt^{-1}\proj_{ \Jt \Gt} - \damping( \bfI - \proj_{ \Jt \Gt}),
\]
which completes the proof.
\hfill $\qed$
\end{proof}

When \(\damping = 0\), the above theorem demonstrates the convergence of the approximate Jacobian inverse \(\Ht\) to the Newton-GMRES update operator. This suggests that the iteration sequences generated by AAP and Newton-GMRES are likely to be close to each other, provided that they start from the same initial point with a sufficiently small residual.

As $\widehat{\p}_t = \St\Yt^{\dagger}\f_t$,
the following convergence of $\widehat{\p}_t$ to the Newton-GMRES search direction $\p^N_t=\Jt^{-1}\proj_{ \Jt \Gt}( \f_t)$ is straightforward.

\begin{corollary}
\label{theorem:pconvergence}
Under Assumptions~\ref{assumption:gaap} and \ref{assum:ytgt},
when $\f_t$ goes to 0, we have  $\frac{1}{ \|\f_t\|}\widehat{\p}_t \rightarrow \frac{1}{ \|\f_t\|} \p^N_t$.
\end{corollary}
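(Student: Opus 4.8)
The plan is to obtain this corollary as an essentially immediate consequence of \Cref{thm:Hcvgce}. Recall from \Cref{sec:krylovbase} that $\widehat{\p}_t = \St\Yt^{\dagger}\f_t$, and that $\p^N_t = \Jt^{-1}\proj_{\Jt\Gt}(\f_t)$ by \eqref{equ:newtonproj}, the latter identity being valid for all $t$ large enough that $\Jt$ is invertible, which holds under \Cref{assum:ytgt}. Both $\widehat{\p}_t$ and $\p^N_t$ vanish as $\f_t\to0$, so the assertion must be read in the sequence-to-sequence sense of the Notation section: the content is that after dividing by $\|\f_t\|$ the two search directions become asymptotically equal.

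First I would introduce the unit vector $\widehat{\f}_t := \f_t/\|\f_t\|$ and write $\frac{1}{\|\f_t\|}\widehat{\p}_t = (\St\Yt^{\dagger})\,\widehat{\f}_t$ and $\frac{1}{\|\f_t\|}\p^N_t = (\Jt^{-1}\proj_{\Jt\Gt})\,\widehat{\f}_t$. Subtracting and using compatibility of the operator $2$-norm with the Euclidean norm together with $\|\widehat{\f}_t\|=1$ gives
\[
\Big\| \frac{1}{\|\f_t\|}\widehat{\p}_t - \frac{1}{\|\f_t\|}\p^N_t \Big\|
\le \big\| \St\Yt^{\dagger} - \Jt^{-1}\proj_{\Jt\Gt} \big\|\,\|\widehat{\f}_t\|
= \big\| \St\Yt^{\dagger} - \Jt^{-1}\proj_{\Jt\Gt} \big\|.
\]
By \Cref{thm:Hcvgce} the right-hand side tends to $0$ as $\f_t\to0$, which is exactly the claim $\frac{1}{\|\f_t\|}\widehat{\p}_t \rightarrow \frac{1}{\|\f_t\|}\p^N_t$.

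There is essentially no hard step left: the delicate work—bounding $\|\Yt^{\dagger}\|$ via $\sigma_{\min}(\Yt)\ge M\|\f_t\|$, transferring $\Yt\to\Jt\Gt$ to $\Yt^{\dagger}\to(\Jt\Gt)^{\dagger}$ through the perturbation estimate \eqref{equ:ytinvbound}, and consistently factoring $1/\|\f_t\|$ out of $\St$, $\Yt$, and $\Jt\Gt$—has already been carried out in the proof of \Cref{thm:Hcvgce}. The only point worth a sentence is that the bound above uses $\|\widehat{\f}_t\|=1$ uniformly in $t$, so the normalization by $\|\f_t\|$ is precisely what makes the statement non-trivial and no assumption on $\f_t$ beyond $\f_t\to0$ is needed. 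An alternative route would combine \Cref{theorem:aapequivalence}, which exhibits $\widehat{\p}_t$ as the GMRES minimizer over $\mathcal{K}_m(\Bt,\f_t)$, with $\Bt^{\ast}\to\Jt$ from \Cref{theorem:etbound}; but routing through \Cref{thm:Hcvgce} is cleaner, since the pseudoinverse perturbation argument is the only subtle ingredient and it is already available there.
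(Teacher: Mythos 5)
Your argument is exactly the paper's route: the corollary is obtained by applying the operator convergence $\St\Yt^{\dagger} \rightarrow \Jt^{-1}\proj_{\Jt\Gt}$ from \Cref{thm:Hcvgce} to the unit vector $\f_t/\|\f_t\|$, using $\widehat{\p}_t = \St\Yt^{\dagger}\f_t$ and $\p^N_t = \Jt^{-1}\proj_{\Jt\Gt}(\f_t)$. The proposal is correct and matches the paper's (deliberately brief) derivation.
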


\begin{remark}
    For \Cref{assum:ytgt}, the condition that $\Jt$ is invertible is satisfied when $g$ is a contractive mapping, i.e., $\contraction<1$.
\end{remark}

\subsection{Convergence of the optimization gain}
\label{sec:opop}

As mentioned in Section~\ref{sec:review}, the optimization gain $\theta_t$, which is defined as the ratio between the minimum of the constrained LS problem \eqref{equ:constrainedLS} and $\|\f_{t}\|$, is a crucial factor in the convergence analysis of Anderson acceleration.
Based on \eqref{residualequation},
the optimization gain can be written as
\begin{equation}
        \label{equ:thetaaap}
    \theta_t: = \frac{\| \sum_{\ell=0}^m \alpha_t^{\ell} \f_t^{\ell}\|  }{\|\f_t\|} 
    = \frac{\|\Bt\widehat{\p}_t - \f_t\|}{\|\f_t\|}
    = \frac{ \|\Yt \z_t -\f_t\|  }{\|\f_t\|}
    = \frac{ \| (\bfI-\proj_{\Yt}) (\f_t)\|}{\|\f_t\|},    
\end{equation}
In this section, we provide convergence estimates for $\theta_t$ to an analogous gain term in the Newton-GMRES method.

Recall that at iterate \(\x_t\), the Newton-GMRES method uses GMRES($m$) to solve \(\Jt \p - \f_t=0\)  where $\Jt = f'(\x_t)$ is the Jacobian at $\x_t$.
We define the \textit{Newton-GMRES(\(m\)) gain} as \(\|\r_t^J\|/\|\f_t\|\) where $\r_t^J$ is the GMRES residual:
\begin{equation}
\label{definition:gain}
\|\r_t^J\| := \min_{\p \in \mathcal{K}_{m}(\Jt, \f_t)} \|\Jt \p - \f_t\| = \|(\bfI- \proj_{\Jt\Gt}) (\f_t)\|,
\end{equation}
as shown in \eqref{equ:newtonproj}.  
The term \textit{Newton-GMRES(\(m\)) gain}
is used to distinguish it from the Newton-GMRES algorithm
since $\r_t^J$ is evaluated at iteration $\x_t$ generated by the AAP method.
If \(g\) is a linear function,  we have \(\theta_t = \|\r_t^J\|/\|\f_t\|\) according to \eqref{equ:linearcase}.
The following theorem provides the convergence of $\theta_t$ to $\|\r_t^J\| / \|\f_t\|$ provided that $\|\f_t\|\to0$ as $t\to\infty$.

\begin{theorem}[Limit of optimization gain]
    \label{theorem:gain}
Under Assumptions~\ref{assumption:gaap} and \ref{assum:ytgt},
when $\|\f_t\|$ is small enough, 
\begin{equation}
    \label{equ:optimizationgainbound}
    \left|\, \theta_t  - \|\r_t^J\|/\|\f_t\| \,\right| \leq \mathcal{O}(\|\f_t\|),
\end{equation}
implying that \(\theta_t\) converges to \(\|\r_t^J\|/\|\f_t\|\) as \( \|\f_t\| \) approaches zero.
\end{theorem}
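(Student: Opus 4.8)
The plan is to compare the two gains directly through their defining projection formulas. From \eqref{equ:thetaaap} we have $\theta_t = \|(\bfI-\proj_{\Yt})\f_t\|/\|\f_t\|$, and from \eqref{definition:gain} the Jacobian-GMRES gain is $\|\r_t^J\|/\|\f_t\| = \|(\bfI-\proj_{\Jt\Gt})\f_t\|/\|\f_t\|$. So the quantity to bound is $\big|\,\|(\bfI-\proj_{\Yt})\f_t\| - \|(\bfI-\proj_{\Jt\Gt})\f_t\|\,\big|/\|\f_t\|$. By the reverse triangle inequality this is at most $\|(\proj_{\Yt} - \proj_{\Jt\Gt})\f_t\|/\|\f_t\| \leq \|\proj_{\Yt} - \proj_{\Jt\Gt}\|$, so it suffices to show $\|\proj_{\Yt} - \proj_{\Jt\Gt}\| \leq \mathcal{O}(\|\f_t\|)$.

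First I would invoke \Cref{lemma:convStYt}, which gives $\|\Yt - \Jt\Gt\| \leq 2\gamma m^{5/2}\|\f_t\|^2$, and then, exactly as in the proof of \Cref{thm:Hcvgce}, use the perturbation bound \eqref{equ:ytinvbound} from \cite[Theorem~3.4]{stewart1977perturbation} together with the estimates $\|\Yt^\dagger\| \leq 1/(M\|\f_t\|)$ (from \Cref{assum:ytgt}) and $\|(\Jt\Gt)^\dagger\| \leq \tau/(M\|\f_t\|)$ for small enough $\|\f_t\|$ (from \eqref{equ:deltaM}). Then $\proj_{\Yt} - \proj_{\Jt\Gt} = \Yt\Yt^\dagger - (\Jt\Gt)(\Jt\Gt)^\dagger$ can be split, e.g.\ as $(\Yt - \Jt\Gt)\Yt^\dagger + (\Jt\Gt)(\Yt^\dagger - (\Jt\Gt)^\dagger)$, and each piece bounded: the first by $\|\Yt-\Jt\Gt\|\,\|\Yt^\dagger\| \leq (2\gamma m^{5/2}\|\f_t\|^2)\cdot(M\|\f_t\|)^{-1} = \mathcal{O}(\|\f_t\|)$, and the second by $\|\Jt\Gt\|\,\|\Yt^\dagger - (\Jt\Gt)^\dagger\|$, where $\|\Yt^\dagger - (\Jt\Gt)^\dagger\| \leq \sqrt2\,\|\Yt^\dagger\|\,\|(\Jt\Gt)^\dagger\|\,\|\Yt-\Jt\Gt\| = \mathcal{O}(\|\f_t\|^2)\cdot\|\f_t\|^{-2} = \mathcal{O}(1)$ and $\|\Jt\Gt\| \to 0$ like $\|\f_t\|$ (since $\Gt \to 0$ at that rate while $\Jt$ is uniformly bounded by $1+\kappa$ under \Cref{assumption:gaap}(a)), again giving $\mathcal{O}(\|\f_t\|)$. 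Combining, $\|\proj_{\Yt} - \proj_{\Jt\Gt}\| \leq \mathcal{O}(\|\f_t\|)$, which yields \eqref{equ:optimizationgainbound} and hence $\theta_t \to \|\r_t^J\|/\|\f_t\|$.

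The main obstacle is the bookkeeping around the $\|\f_t\|$-scaling: both $\Yt$ and $\Jt\Gt$ vanish, so $\Yt^\dagger$ and $(\Jt\Gt)^\dagger$ blow up like $1/\|\f_t\|$, and one must verify that the $\|\f_t\|^2$ in the numerator of $\|\Yt - \Jt\Gt\|$ (Lemma~\ref{lemma:convStYt}) always wins against the $1/\|\f_t\|$ from a pseudoinverse — it does, leaving one net factor of $\|\f_t\|$ — but the product $\|\Yt^\dagger - (\Jt\Gt)^\dagger\|$ is itself only $\mathcal{O}(1)$, not small, so that term must be paired with the vanishing factor $\|\Jt\Gt\|$ rather than treated on its own. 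The cleanest way to keep this straight, mirroring \Cref{thm:Hcvgce}, is to carry out the whole argument with the normalized matrices $\frac{1}{\|\f_t\|}\Yt$ and $\frac{1}{\|\f_t\|}\Jt\Gt$, whose pseudoinverses are uniformly bounded, and note that $\proj_{\Yt} = \proj_{\frac{1}{\|\f_t\|}\Yt}$ is scale-invariant; then \eqref{equ:ytinvboundscaked} gives $\|(\frac{1}{\|\f_t\|}\Yt)^\dagger - (\frac{1}{\|\f_t\|}\Jt\Gt)^\dagger\| \to 0$ directly and the projection difference follows with an explicit $\mathcal{O}(\|\f_t\|)$ rate.
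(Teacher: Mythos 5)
Your proposal is correct and follows essentially the same route as the paper: reduce $\left|\theta_t - \|\r_t^J\|/\|\f_t\|\right|$ to $\|\proj_{\Yt}-\proj_{\Jt\Gt}\|$, then control this via \Cref{lemma:convStYt} and the pseudoinverse perturbation bound already established in \Cref{thm:Hcvgce}. Your explicit splitting $(\Yt-\Jt\Gt)\Yt^{\dagger} + (\Jt\Gt)\bigl(\Yt^{\dagger}-(\Jt\Gt)^{\dagger}\bigr)$, together with the observation $\|\Jt\Gt\|\leq\mathcal{O}(\|\f_t\|)$, merely spells out the bookkeeping the paper handles implicitly through the normalized matrices $\frac{1}{\|\f_t\|}\Yt$ and $\frac{1}{\|\f_t\|}\Jt\Gt$, as you yourself note in your closing paragraph.
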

\begin{proof}
By the definitions,
\begin{equation}\textstyle
\label{equ:thetatbound}
    \left| \theta_t  - \frac{\|\r_t^J\|}{\|\f_t\|} \right| 
    =  \frac{\|  (\bfI- \proj_{\Yt}) (\f_t) - (\bfI-\proj_{\Jt\Gt}) (\f_t)\|}{\|\f_t\|}
    \leq \|   \proj_{\Yt}  - \proj_{\Jt\Gt} \|.
\end{equation}  

\Cref{lemma:convStYt} shows that $\|\frac{1}{\|\f_t\|}\Yt  - \frac{1}{\|\f_t\|}\Jt\Gt\| \leq \mathcal{ O} (\|\f_t\| )$.
Meanwhile, when $\|\f_t\|$ is small enough,
\eqref{equ:ytinvboundscaked} in \Cref{thm:Hcvgce} shows that 
$\|(\frac{1}{\|\f_t\|}\Yt)^{\dagger}  - (\frac{1}{\|\f_t\|}\Jt\Gt)^{\dagger}\| \leq \mathcal{ O} (\|\f_t\| )$,
Therefore,
\[
\|   \proj_{\Yt}  - \proj_{\Jt\Gt} \| = \|(\frac{1}{\|\f_t\|}\Yt)(\frac{1}{\|\f_t\|}\Yt)^{\dagger}  - (\frac{1}{\|\f_t\|}\Jt\Gt)(\frac{1}{\|\f_t\|}\Jt\Gt)^{\dagger}\| \leq \mathcal{ O} (\|\f_t\| ),
\]
   which finished the proof.
\end{proof}

We showed that the optimization gain converges to the Newton-GMRES(\(m\)) gain \(\|\r_t^J\| / \|\f_t\|\) as the residual \(\f_t\) approaches zero. 
Thus, \(\|\r_t^J\| / \|\f_t\|\) can serve as an estimator of \(\theta_t\) when the residual is small.
For a general \(\Jt\), the bounds on the \(m\)-th residual  produced by GMRES applied to \(\Jt \p = \f_t\) can vary widely\cite{liesen2004convergence}. 
Roughly speaking, this says that fast convergence occurs when the eigenvalues of A are clustered away from the origin and A is not too far from normality.
Specially, when \(\Jt\) is symmetric and positive definite, the norm of the \(m\)-th GMRES residual is well-known to be bounded as follows
\begin{equation}
\label{equ:upppedboundgain}
    \frac{\|\r_t^J\| }{ \|\f_t\|} \leq 2 \left( \frac{\sqrt{\cond(\Jt)} - 1}{\sqrt{\cond(\Jt)} + 1} \right)^m \approx 2 \left(1 - \frac{2}{\sqrt{\cond(\Jt)}} \right)^m,
\end{equation}
showing exponential decay with respect to \(m\). 
However, according to \eqref{equ:ytstbound}, when \(m\) is large, the upper bound in \eqref{equ:optimizationgainbound} could have a large constant, which makes the bound loose.
Numerical examples are provided in \Cref{fig:logistic_m-theta} in Section~\ref{sec:logistic}.

\begin{remark}
Another interpretation that illustrates the relationship between \( \theta_t \) and \( \|\r_t^J\|/\|\f_t\| \) is based on the equivalence between AAP and multisecant-GMRES.
Specifically, from \eqref{residualequation}, one can write $
\theta_t \|\f_t\| =\|\Bt\widehat{\p}_t - \f_t\|$,
for any \(\Bt \in \bbB\).
Since \((\Jt + \Et)\St = \Yt\), we have 
\[
\theta_t \|\f_t\| = \min_{\p \in \mathcal{K}_m(\Jt + \Et, \f_t)} \|(\Jt + \Et)\p - \f_t\|.
\]
Combining this with \eqref{definition:gain}, the relation between \(\theta_t\) and \(\|\r_t^J\|/\|\f_t\|\) can be regarded as the change in GMRES residuals under perturbation \(\Et\).
Interested readers can refer to~\cite{sifuentes2013gmres}, where the GMRES residual change with respect to perturbations on the coefficient matrix is estimated using spectral perturbation theory and resolvent estimates. A simple application of Theorem~2.1 therein gives
$\left|\,\theta_t - \|\r_t^J\|/\|\f_t\|\,\right| \leq \mathcal{O}(\|\Et\|)$ under some conditions.
This bound is essentially identical as \eqref{equ:optimizationgainbound}.
\end{remark}

\section{Convergence analysis}
\label{sec:conv}

This section presents the convergence analysis of AAP. \edit{
We first provide a one-step estimate for \(\|\f_{t+1}\|\) in terms of the multisecant matrix in \Cref{thm:residual_cvgce}, followed by a local $q$-convergence result for AAP in \Cref{theorem:localconv} and an upper bound of asymptotic $q$-linear convergence factor in \Cref{cor:asymptotic-qfactor}. 
We note that the one-step estimate in \Cref{thm:residual_cvgce} is different from existing ones in, e.g., \cite{evans2020proof} and \cite{pollock2021anderson}, in the sense that it does not rely on unknown constants from assumptions such as bounded least-squares solutions.
Instead, the estimate here are dependent on the properties of the underlying multisecant matrix, which follows from the equivalence between AAP and multisecant-GMRES established in Section~\ref{sec:aapequivalence}.
Furthermore, the convergence of AAP to Newton-GMRES in Section~\ref{sec:relation} allows us to quantify the asymptotic convergence behavior of AAP.}

We present some preliminary results before starting the convergence analysis.
First, recall \Cref{coro:praap} which states that the update of \( \x_{t+1} \) in AAP satisfies
\[
\x_{t+1} = \x_t - \widehat{\p}_t - \damping \widehat{\r}_t,
\]
where \(\widehat{\r}_t = \Bt \widehat{\p}_t - \f_t\) is the residual of the multisecant-GMRES. 
It follows from the optimization gain \(\theta_t = \|\Bt \widehat{\p}_t - \f_t\| / \|\f_t\|\) that \(\| \widehat{\r}_t \| = \theta_t \| \f_t \|\).
Meanwhile,
the following lemma characterizes the lengths of $\| \widehat{\p}_t \|$.

\begin{lemma}\label{lemma:boundpp}
For any invertible $\Bt \in \bbB$, we have
$\widehat{\p}_t$  satisfies \[\textstyle
\| \widehat{\p}_t \| \leq \sqrt{ 1-\theta_t^2 }\|\Bt^{-1}\| \|\f_{t}\|.
    \]
\end{lemma}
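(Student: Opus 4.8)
The plan is to exploit the fact that $\widehat{\p}_t$ is the GMRES-type solution to the multisecant system $\Bt\p=\f_t$ over the Krylov subspace $\mathcal{K}_m(\Bt,\f_t)$, so that its image $\Bt\widehat{\p}_t$ is the orthogonal projection of $\f_t$ onto the subspace $\Bt\mathcal{K}_m(\Bt,\f_t)=\spans(\Yt)$. The key geometric observation is that the multisecant-GMRES residual $\widehat{\r}_t=\Bt\widehat{\p}_t-\f_t$ is orthogonal to $\Bt\widehat{\p}_t$, since $\widehat{\r}_t=-(\bfI-\proj_{\Yt})\f_t$ and $\Bt\widehat{\p}_t=\proj_{\Yt}\f_t$ lie in orthogonal complements. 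Therefore, by the Pythagorean theorem,
\[
\|\Bt\widehat{\p}_t\|^2 = \|\f_t\|^2 - \|\widehat{\r}_t\|^2 = \|\f_t\|^2 - \theta_t^2\|\f_t\|^2 = (1-\theta_t^2)\|\f_t\|^2,
\]
where the second equality uses $\|\widehat{\r}_t\|=\theta_t\|\f_t\|$, established just before the lemma statement from the definition of the optimization gain $\theta_t$ in \eqref{equ:thetaaap}.

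The remaining step is to pass from $\Bt\widehat{\p}_t$ back to $\widehat{\p}_t$. Since $\Bt$ is assumed invertible, $\widehat{\p}_t=\Bt^{-1}(\Bt\widehat{\p}_t)$, and hence
\[
\|\widehat{\p}_t\| \leq \|\Bt^{-1}\|\,\|\Bt\widehat{\p}_t\| = \sqrt{1-\theta_t^2}\,\|\Bt^{-1}\|\,\|\f_t\|,
\]
which is exactly the claimed bound. One should note that the statement asserts this for \emph{any} invertible $\Bt\in\bbB$; this is consistent because $\widehat{\p}_t$ itself does not depend on the choice of $\Bt$ (it equals $\St\Yt^\dagger\f_t$, as shown in \Cref{theorem:aapequivalence} and the discussion following it), while only the scalar factor $\|\Bt^{-1}\|$ varies with $\Bt$, so the tightest bound is obtained by minimizing $\|\Bt^{-1}\|$ over invertible elements of $\bbB$.

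There is essentially no serious obstacle here; the only point that requires a moment of care is verifying the orthogonality $\widehat{\r}_t\perp\Bt\widehat{\p}_t$ cleanly. This follows most transparently from the projection identities $\Bt\widehat{\p}_t=\proj_{\Yt}\f_t$ and $\widehat{\r}_t=-(\bfI-\proj_{\Yt})\f_t$, which are recorded in \eqref{residualequation} and in the discussion of $\Ht$ in \Cref{sec:krylovbase}; alternatively it is the standard normal-equation optimality condition for the least-squares problem $\min_{\p\in\mathcal{K}_m(\Bt,\f_t)}\|\Bt\p-\f_t\|^2$, namely that the residual is orthogonal to the range of $\Bt$ restricted to the Krylov subspace, which contains $\Bt\widehat{\p}_t$.
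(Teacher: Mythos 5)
Your proposal is correct and follows essentially the same route as the paper's proof: the orthogonality $\Bt\widehat{\p}_t \perp (\Bt\widehat{\p}_t - \f_t)$ from the least-squares optimality, the Pythagorean identity giving $\|\Bt\widehat{\p}_t\| = \sqrt{1-\theta_t^2}\,\|\f_t\|$ via $\|\widehat{\r}_t\| = \theta_t\|\f_t\|$, and then invertibility of $\Bt$ to bound $\|\widehat{\p}_t\| \leq \|\Bt^{-1}\|\,\|\Bt\widehat{\p}_t\|$. Your added remarks on the projection interpretation and the independence of $\widehat{\p}_t$ from the choice of $\Bt$ are consistent with the paper but not needed for the argument.
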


\begin{proof}
    Since $\widehat{\p}_t = \argmin_{\p \in \mathcal{K}_{\E}(\Bt, \f_t)}   \| \Bt \p - \f_t \|$, $ \Bt\widehat{\p}_t \perp (\Bt \widehat{\p}_t  - \f_t)$.
    Thus, $\| \Bt\widehat{\p}_t\| = \sqrt{ 1-\theta_t^2 }\|\f_{t}\|$ as $ \|\Bt \widehat{\p}_t  - \f_t\|  =\theta_t \|\f_{t}\|$.
    The result follows from the nonsingularity of $\Bt$.
\end{proof}

We note that assuming $\Bt$ nonsingular does not impose additional restrictions, since the equivalence of AAP and multi-secant GMRES holds for any $\Bt\in\bbB$ (Section~\ref{sec:aapequivalence}) and $\bbB$ has nonsingular elements under \Cref{assu:syfullrank} as shown in Section~\ref{sec:multi}.

The above bound holds for any invertible $\Bt\in \bbB$.
We give a lower bound estimate of  $\|\Bt^{-1}\|$: 
considering that $\Bt^{-1}$ satisfies $\Bt^{-1} \Yt = \St$,
we have,
according to the minimal norm properties of the pseudoinverse,
$\|\Bt^{-1}\| \geq \|\St \Yt^{\dagger}\|,
\forall\, \Bt\in \bbB
$ being invertible.
On the other hand,
if $\Jt$ is invertible and $\|\Jt^{-1}\Et\|<1$, \cite[Theorem~2.3.4]{golub2013matrix} gives that $\Jt+\Et \in \bbB$ is invertible and 
\begin{equation}
    \label{matrixinversebound}
     \| (\Jt+\Et)^{-1} - \Jt^{-1}\| \leq 
    \frac{\|\Et\| \, \|\Jt^{-1}\|^2 }{1- \|\Jt^{-1}\Et\|}.
\end{equation}
When $\|\Et\|$ is small enough, the denominator is negligible.
It follows that  $\| (\Jt+\Et)^{-1} - \Jt^{-1}\| \leq \mathcal{O}( \|\Et\|  )$ 
and $\|\Jt^{-1}\|$ can be used to estimate $\|\Bt^{-1}\|$ when $\|\Et\|$ is small enough.

\medskip
Now, we are ready to give the bound of $\|\f_{t+1}\|$.

\begin{theorem}\label{thm:residual_cvgce}
Under Assumptions~\ref{assumption:gaap} and \ref{assm:condst},
for any invertible $\Bt \in \bbB$,
\begin{equation}
\label{equ:boundresidual}
        \|\f_{t+1}\| \leq \left[(1-\damping) + \damping \contraction\right] \theta_t \|\f_t\| + \sqrt{1 - \theta_t^2} \|\Bt^{-1}\| \left[\frac{\gamma}{2} \sqrt{1 - \theta_t^2}\|\Bt^{-1}\|  + C_E\right] \|\f_t\|^2.
\end{equation}
\end{theorem}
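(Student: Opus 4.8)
The plan is to start from the AAP update written as $\x_{t+1} = \x_t - \widehat{\p}_t - \damping \widehat{\r}_t$ (\Cref{coro:praap}) and from the exact identity $\x_{t+1} = \widehat{g}_t(\x_t - \widehat{\p}_t)$ in \Cref{theorem:aapequivalence}. The residual $\f_{t+1} = f(\x_{t+1}) = g(\x_{t+1}) - \x_{t+1}$ is what we must bound, so I would split it into a ``linearized'' part and a ``linearization error'' part: write $f(\x_{t+1}) = \widehat f_t(\x_{t+1}) + [f(\x_{t+1}) - \widehat f_t(\x_{t+1})]$, where $\widehat f_t(\x) = \Bt(\x - \x_t) + f(\x_t)$ is the multisecant linear model from Section~\ref{sec:multi} with $\Bt = \Bt^* = \Jt + \Et$ chosen as the minimal-distance multisecant matrix, so that $\|\Bt - \Jt\| = \|\Et\| \le C_E\|\f_t\|$ by \Cref{theorem:etbound}. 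The second bracket is the Taylor-type remainder, which I would control using the line-integral identity \eqref{eqn:line_int} together with the Lipschitz bound on $g'$ (hence on $f' = g' - \bfI$): it is of order $\frac{\gamma}{2}\|\x_{t+1} - \x_t\|^2$ plus a cross term $\|\Et\|\,\|\x_{t+1}-\x_t\|$ coming from $\Bt \ne \Jt$, and $\|\x_{t+1}-\x_t\| = \|\widehat{\p}_t + \damping\widehat{\r}_t\|$ can be bounded via \Cref{lemma:boundpp} (giving $\|\widehat{\p}_t\| \le \sqrt{1-\theta_t^2}\|\Bt^{-1}\|\|\f_t\|$) and $\|\widehat{\r}_t\| = \theta_t\|\f_t\|$.

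For the linearized part, I would use the structure established in Section~\ref{sec:aapequivalence}. Since $\widehat f_t$ is affine with slope $\Bt$, we have $\widehat f_t(\x_{t+1}) = \widehat f_t(\x_t - \widehat{\p}_t - \damping\widehat{\r}_t) = \widehat f_t(\x_t - \widehat{\p}_t) - \damping \Bt \widehat{\r}_t$. Now $\widehat f_t(\x_t - \widehat{\p}_t) = f(\x_t) - \Bt\widehat{\p}_t = -\widehat{\r}_t$ by the definition of the multisecant-GMRES residual \eqref{residualequation}. Therefore $\widehat f_t(\x_{t+1}) = -\widehat{\r}_t - \damping \Bt\widehat{\r}_t = -(\bfI + \damping\Bt)\widehat{\r}_t$. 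At this point I need to convert $\widehat f_t$ back into something comparable to $g$: recall $\widehat g_t(\x) = (1-\damping)\x + \damping[\x + \widehat f_t(\x)]$, so $\widehat f_t = (\widehat g_t(\x) - \x)/\damping$ when $\damping \ne 0$ — but more usefully, I would exploit that the AAP update point $\sum_\ell \alpha_t^\ell \x_t^\ell = \x_t - \widehat{\p}_t$ and that $f$ and $\widehat f_t$ agree on all Picard points $\x_t^\ell$; by non-expansiveness of $g$ (\Cref{assumption:gaap}(a)) the ``Picard-pushed'' average contracts. Concretely, the cleanest route is: $\x_{t+1} = (1-\damping)\sum_\ell\alpha_t^\ell\x_t^\ell + \damping\sum_\ell\alpha_t^\ell g(\x_t^\ell)$, so $f(\x_{t+1})$ compared with $(1-\damping)\sum_\ell\alpha_t^\ell f(\x_t^\ell) + \damping\sum_\ell\alpha_t^\ell f(g(\x_t^\ell))$ has error $\le \frac{\gamma}{2}\|\x_{t+1} - \x_t\|^2$-type terms (via Jensen and line integrals), and that convex-combination quantity has norm $\le [(1-\damping) + \damping\contraction]\|\sum_\ell\alpha_t^\ell f(\x_t^\ell)\| = [(1-\damping)+\damping\contraction]\theta_t\|\f_t\|$ using $\|f(g(\x_t^\ell))\| \le \contraction\|f(\x_t^\ell)\|$ and pulling $\theta_t\|\f_t\| = \|\sum\alpha_t^\ell\f_t^\ell\|$ out. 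This yields the first (linear-in-$\|\f_t\|$) term with the stated coefficient $[(1-\damping)+\damping\contraction]\theta_t$.

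Assembling: $\|\f_{t+1}\| \le [(1-\damping)+\damping\contraction]\theta_t\|\f_t\| + (\text{remainder})$, and the remainder must be shown to be at most $\sqrt{1-\theta_t^2}\|\Bt^{-1}\|\bigl[\tfrac{\gamma}{2}\sqrt{1-\theta_t^2}\|\Bt^{-1}\| + C_E\bigr]\|\f_t\|^2$. For this I would note the remainder collects (i) a quadratic term $\tfrac{\gamma}{2}$ times something like $\|\widehat{\p}_t\|^2$ — bounded by $\tfrac{\gamma}{2}(1-\theta_t^2)\|\Bt^{-1}\|^2\|\f_t\|^2$ via \Cref{lemma:boundpp} — and (ii) an $\|\Et\|\cdot\|\widehat{\p}_t\|$ term bounded by $C_E\|\f_t\| \cdot \sqrt{1-\theta_t^2}\|\Bt^{-1}\|\|\f_t\|$; factoring $\sqrt{1-\theta_t^2}\|\Bt^{-1}\|\|\f_t\|^2$ out of both gives exactly the bracket. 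The damping pieces involving $\widehat{\r}_t$ must be shown to fold into the linear term rather than contributing extra $\|\f_t\|^2$ — this is where the algebra of $-(\bfI+\damping\Bt)\widehat{\r}_t$ versus the convex-combination viewpoint needs care, and I believe the convex-combination route avoids spurious $\|\Bt\|\|\widehat{\r}_t\|$ terms because $\damping\sum\alpha_t^\ell g(\x_t^\ell)$ is handled directly by non-expansiveness without ever differentiating. The main obstacle I anticipate is precisely this bookkeeping: making sure every error term is genuinely $O(\|\f_t\|^2)$ (using \Cref{lemma:picardpoints} to control $\|\x_t^\ell - \x_t\|$ and the line-integral remainder estimates uniformly over $\ell$ and over the convex combination), and that the linear term keeps the sharp coefficient $[(1-\damping)+\damping\contraction]\theta_t$ rather than something weaker like $\theta_t$ alone or $\contraction\theta_t$ alone. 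Once the decomposition is set up correctly, each individual bound is a routine application of results already proved (\Cref{theorem:etbound}, \Cref{lemma:boundpp}, \Cref{lemma:picardpoints}, and \eqref{eqn:line_int}).
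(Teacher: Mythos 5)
Your setup (starting from \Cref{coro:praap}, using $\|\widehat{\r}_t\|=\theta_t\|\f_t\|$, \Cref{lemma:boundpp} for $\|\widehat{\p}_t\|$, and $\|\Et\|\le C_E\|\f_t\|$ from \Cref{theorem:etbound}) assembles the right ingredients, but the step that is supposed to produce the linear term $[(1-\damping)+\damping\contraction]\theta_t\|\f_t\|$ has a genuine gap in both routes you sketch. In the convex-combination route you bound $\|(1-\damping)\sum_\ell\alpha_t^\ell f(\x_t^\ell)+\damping\sum_\ell\alpha_t^\ell f(g(\x_t^\ell))\|$ by applying $\|f(g(\x_t^\ell))\|\le\contraction\|f(\x_t^\ell)\|$ termwise and then ``pulling out'' $\theta_t\|\f_t\|=\|\sum_\ell\alpha_t^\ell\f_t^\ell\|$. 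This does not work: moving the estimate inside the sum via the triangle inequality only yields $\sum_\ell|\alpha_t^\ell|\,\|f(\x_t^\ell)\|$, and since the $\alpha_t^\ell$ are not sign-constrained (and $\sum_\ell|\alpha_t^\ell|$ is not controlled by the hypotheses of the theorem), this can be far larger than $\theta_t\|\f_t\|$; the smallness of $\theta_t\|\f_t\|$ comes precisely from cancellation inside the sum, which termwise contraction destroys. The same issue reappears in your Jensen-type comparison of $f$ at the weighted average with the weighted average of $f$-values, whose second-order error carries a factor $\sum_\ell|\alpha_t^\ell|$ absent from \eqref{equ:boundresidual}. The alternative affine-model route, $\widehat f_t(\x_{t+1})=-(\bfI+\damping\Bt)\widehat{\r}_t$, forces $\|\bfI+\damping\Bt\|\le(1-\damping)+\damping\contraction+\damping\|\Et\|$ and a linearization error along the full step $\widehat{\p}_t+\damping\widehat{\r}_t$, producing extra terms of size $\damping\theta_t C_E\|\f_t\|^2$ and $\gamma\damping^2\theta_t^2\|\f_t\|^2$ that are not in \eqref{equ:boundresidual}; so that route proves a weaker variant rather than the stated inequality.

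The paper closes exactly this bookkeeping with a different grouping: write $f(\x_{t+1})=\f_t-\int_0^1 f'(\x_t-s\widehat{\p}_t)\widehat{\p}_t\,\d s-\int_0^1 f'(\widehat{\x}_t-s\damping\widehat{\r}_t)\,\damping\widehat{\r}_t\,\d s$ along the two legs $\x_t\to\widehat{\x}_t:=\x_t-\widehat{\p}_t$ and $\widehat{\x}_t\to\x_{t+1}$, then substitute $\f_t=-\widehat{\r}_t+\Bt\widehat{\p}_t$ from \eqref{residualequation} and regroup so that $\widehat{\r}_t$ is multiplied by the exact path-dependent operator $\int_0^1[(1-\damping)\bfI+\damping g'(\cdot)]\,\d s$, whose norm is at most $(1-\damping)+\damping\contraction$ with no $\gamma$ or $\Et$ attached, while all $\gamma$- and $\Et$-dependence attaches only to $\widehat{\p}_t$ through $\int_0^1[\Bt-f'(\x_t-s\widehat{\p}_t)]\widehat{\p}_t\,\d s$, bounded using $\Bt\widehat{\p}_t=(\Jt+\Et)\widehat{\p}_t$, Lipschitz continuity of $g'$, and \Cref{lemma:boundpp}. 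With that regrouping the constants of \eqref{equ:boundresidual} come out exactly; without it, neither of your decompositions reproduces the stated bound.
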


\begin{proof}
The update can be expressed as
\[
\x_{t+1} = \x_t - \p_t = \x_t - (\widehat{\p}_t + \damping \widehat{\r}_t) = \widehat{\x}_t- \damping \widehat{\r}_t,
\]
where \( \widehat{\x}_t:= \x_t - \widehat{\p}_t = \sum_\ell \alpha^{\ell} \x_t^{\ell} \).
By \eqref{eqn:line_int},
\[
\begin{aligned}    
f(\x_{t+1}) &= f(\x_t) + [f(\widehat{\x}_t) - f(\x_t)] +  [f(\x_{t+1}) - f(\widehat{\x}_t)  ] \\
&\textstyle = \f_t - \int_0^1 f'(\x_t-s\widehat{\p}_t )\widehat{\p}_t \,\d r - \int_0^1 f'(\widehat{\x}_t-s\beta_t\widehat{\r}_t )\beta_t\widehat{\r}_t \,\d s\\
&=\textstyle -\widehat{\r}_t+ \Bt\widehat{\p}_t - \int_0^1 f'(\x_t-s\widehat{\p}_t )\widehat{\p}_t \,\d s - \int_0^1 f'(\widehat{\x}_t-s\beta_t\widehat{\r}_t )\beta_t\widehat{\r}_t \,\d s\\
&=\underbrace{\textstyle-\int_0^1 [\bfI+\damping f'(\widehat{\x}_t-s\beta_t\widehat{\r}_t )]\widehat{\r}_t \,\d s}_{:=\mathcal{L}_t}+ \underbrace{\textstyle\int_0^1[\Bt- f'(\x_t-s\widehat{\p}_t )]\widehat{\p}_t \,\d s}_{:=\mathcal{H}_t}  .
\end{aligned}
\]

For the term $\mathcal{L}_t$, we have $\bfI+\damping f'(\x_t-s\widehat{\r}_t ) = (1-\damping )\bfI + \damping [\bfI+f'(\x_t-s\widehat{\r}_t  )]$.
Note that $ \bfI+f'(\x_t-s\widehat{\r}_t  )= g'(\x_t-s\widehat{\r}_t  )$ and $\|g'(\x)\| \leq \contraction, \forall\,\x$.
Thus,
\begin{equation}
    \label{equ:L}
    \begin{aligned}
    \|\mathcal{L}_t\| &\leq 
\int_0^1 \| \bfI+\damping f'(\x_t-s\widehat{\r}_t  )\| \| \widehat{\r}_t\| \,\d s \leq [(1-\damping ) + \damping \contraction] \theta_t\|\f_t\|.
\end{aligned}
\end{equation}

The term $\mathcal{H}_t= \int_0^1 [\Bt - f'(\x_t-s\widehat{\p}_t) ] \widehat{\p}_t \,\d s $ is equal to $ \int_0^1 [\Jt+\Et- f'(\x_t-s\widehat{\p}_t )] \widehat{\p}_t \,\d s $ since $\Bt \widehat{\p}_t = (\Jt+\Et) \widehat{\p}_t$ for all $\Bt \in \bbB$.
By telescoping, we have
\[
\begin{aligned}    
\|\Jt+\Et - f'(\x_t-s\widehat{\p}_t )\|&\leq \|f'(\x_t-s\widehat{\p}_t )- f'(\x_t)\|+ \|f'(\x_t)-(\Jt+\Et)\|\\
&\leq s\gamma \|\widehat{\p}_t\| +\|\Et\| \\
& \leq s \gamma \sqrt{1-\theta_t^2} \| \Bt^{-1}\|   \|\f_t\| + \|\Et\|,
\end{aligned}
\]
where the second inequality holds because $g'$ is Lipschitz continuous and
the last inequality is due to the Lemma~\ref{lemma:boundpp}. 
Thus, we have
\begin{equation}
    \label{equ:H}
    \begin{aligned}
 \| \mathcal{H}_t   \| &\leq  \int_0^1 \|f'(\x_t-s\widehat{\p}_t)-(\Jt+\Et)\| \, \|\widehat{\p}_t\| \,\d s\\
 &\leq \sqrt{1- \theta_t^2}\| \Bt^{-1}\|\left[\frac{\gamma }{2} \sqrt{1-\theta_t^2} \| \Bt^{-1}\|   \|\f_t\| + \|\Et\| \right]\|\f_t\|.
\end{aligned}
\end{equation}

The claim then follows from  \eqref{equ:L}, \eqref{equ:H}, and the estimate $\|\Et\| \leq C_E \|\f_t\|$ given in \Cref{theorem:etbound} under \Cref{assm:condst}.

\hfill $\qed$
\end{proof}

The bound given in \eqref{equ:boundresidual} includes a linear term and a higher-order term, which is similar to those of classical AA given in~\cite{evans2020proof,pollock2021anderson}.
\edit{
However, the higher-order term here only involves $ \|\f_t\|^2$, whose coefficient is determined by the multisecant matrix $\Bt$.
We can see from \Cref{coro:btconver}, where the multisecant matrix $\Bt$ is shown to converge to the Jacobian matrix $\Jt$, that the coefficient of the higher order term can then be estimated by the properties of $\Jt$ when the residual is small.}
The following result is derived directly by applying \eqref{matrixinversebound} and \Cref{theorem:etbound}.
\begin{corollary}
\label{cor:boundJt}
Under Assumptions~\ref{assumption:gaap} and \ref{assm:condst},
assume that $\Jt$ is invertible and  that either $\f_t$, $\gamma$, or both is small enough.
Then, 
\begin{equation}
    \begin{aligned}
       \|\f_{t+1}\|
       \leq &\left[(1-\damping) + \damping \contraction\right] \theta_t \|\f_t\| \\
       &\textstyle
       +\gamma \sqrt{1 - \theta_t^2} \left\|\Jt^{-1}\right\|
       \big[\frac{1}{2} \sqrt{1 - \theta_t^2}\left\|\Jt^{-1}\right\| \|\f_t\|  + \frac{C_E}{\gamma}\big] \|\f_t\|^2+ o(\|\f_t\|^2 ).
\end{aligned} 
\end{equation}
\end{corollary}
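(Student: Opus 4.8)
The plan is to obtain this estimate directly from \Cref{thm:residual_cvgce} by replacing the quantity $\|\Bt^{-1}\|$ appearing there with $\|\Jt^{-1}\|$ up to a perturbation of order $\|\f_t\|$. Since \Cref{thm:residual_cvgce} holds for \emph{any} invertible $\Bt\in\bbB$, I would instantiate it with the minimal-distance multisecant matrix $\Bt^{*}=\Jt+\Et\in\bbB$ from \eqref{definition:et}. The first step is to verify that $\Bt^{*}$ is nonsingular once $\|\f_t\|$ is small: under \Cref{assm:condst}, \Cref{theorem:etbound} gives $\|\Et\|\le C_E\|\f_t\|$, so for $\|\f_t\|$ small one has $\|\Jt^{-1}\Et\|\le C_E\|\Jt^{-1}\|\,\|\f_t\|<1$, and \eqref{matrixinversebound} (Theorem~2.3.4 of \cite{golub2013matrix}) then guarantees that $\Bt^{*}$ is invertible with
\[
\|(\Bt^{*})^{-1}-\Jt^{-1}\|\;\le\;\frac{\|\Et\|\,\|\Jt^{-1}\|^{2}}{1-\|\Jt^{-1}\Et\|}\;\le\;\mathcal{O}(\|\f_t\|),
\qquad\text{hence}\qquad
\|(\Bt^{*})^{-1}\|\;\le\;\|\Jt^{-1}\|+\mathcal{O}(\|\f_t\|).
\]

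Next I would substitute this estimate into the bound of \Cref{thm:residual_cvgce} taken with $\Bt=\Bt^{*}$. In the linear term I use $\theta_t\le1$ to replace $[(1-\damping)+\damping\contraction]\theta_t\|\f_t\|$ by $[(1-\damping)+\damping\contraction]\|\f_t\|$. In the higher-order term I replace each occurrence of $\|\Bt^{-1}\|$ by $\|\Jt^{-1}\|+\mathcal{O}(\|\f_t\|)$; after expanding the product the leading contribution is the stated quadratic term built from $\|\Jt^{-1}\|$, while every cross term carries an extra factor $\|\f_t\|$ and is therefore $\mathcal{O}(\|\f_t\|^{3})=o(\|\f_t\|^{2})$. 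Equivalently, one can carry the same replacement through the intermediate estimates \eqref{equ:L} and \eqref{equ:H} in the proof of \Cref{thm:residual_cvgce}, which reproduces exactly the grouping written in the statement; collecting the terms then yields the claimed inequality.

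There is no substantial obstacle in this argument—it is essentially bookkeeping of a first-order perturbation—so the points that require a little care are (i) establishing invertibility of $\Bt^{*}$ for small residual, which is precisely what makes both \eqref{matrixinversebound} and \Cref{thm:residual_cvgce} applicable, and (ii) noting that the threshold meant by ``$\f_t$ small enough'' and the implied constant in the $o(\|\f_t\|^{2})$ remainder depend on $\Jt$ (through $\|\Jt^{-1}\|$) and on $C_E$, so the result is a local, per-iteration estimate rather than a bound uniform in $t$.
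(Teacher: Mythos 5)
Your argument is correct and matches the paper's own (very brief) derivation: the paper likewise obtains the corollary directly from \Cref{thm:residual_cvgce} by taking $\Bt=\Jt+\Et\in\bbB$, invoking \eqref{matrixinversebound} together with $\|\Et\|\le C_E\|\f_t\|$ to get $\|\Bt^{-1}\|\le\|\Jt^{-1}\|+\mathcal{O}(\|\f_t\|)$, bounding $\theta_t\le 1$ in the linear term, and absorbing the perturbation cross terms into $o(\|\f_t\|^2)$. Your added care about the invertibility of $\Bt^*$ for small residual is a sensible explicit step that the paper leaves implicit.
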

\edit{
Recall from \Cref{theorem:etbound} that the constant $C_E$ is proportional to $\gamma$.
We can see that a small Lipschitz constant $\gamma$, which implies that $g$ is nearly linear, helps control the higher-order term above.
\Cref{cor:boundJt} also implies that an ill-conditioned 
$\Jt$ may lead to oscillations in the residuals.
Ensuring a sufficiently small higher-order term prompts a monotonic decay of the residual after each AA step.}

\begin{remark}
The higher-order term \(\mathcal{H}_t\) in the proof of \Cref{thm:residual_cvgce} equals
\[\textstyle
\mathcal{H}_t = \Bt \widehat{\p}_t + f(\x_t - \widehat{\p}_t) - f(\x_t) = f\left( \sum_{\ell=0}^m \alpha_t^{\ell} \x_t^{\ell} \right) - \sum_{\ell=0}^m \alpha_t^{\ell} f(\x_t^{\ell}),
\]
which might be positive, potentially resulting in \(\|\f_{t+1}\| > \|\f_t\|\). However, if \(f\) is convex and \(\alpha_t^\ell \geq 0\) for $\ell=0,\dots,m$, then \(\mathcal{H}_t \leq 0\) and the monotonic decay of \(\|\f_t\|\) is guaranteed, i.e., \(\|\f_{t+1}\| \leq \|\f_t\|\).
The condition \(\bfalpha_t \geq 0\) can be satisfied by imposing nonnegative constraints on the LS problem in \eqref{equ:constrainedLS}, as in the EDIIS method~\cite{chen2019convergence},
but the convergence might become slow.
\end{remark}

\medskip
Now, we present the local \edit{$q$-linear} convergence of AAP under the condition that $g$ is a contractive mapping, i.e., $\contraction < 1$. 
Under this condition, $g$ has a unique fixed point $\x^*$ such that $f(\x^*) = 0$.
Since $\|g'(\x^*)\| \leq \contraction<1$, it follows  that the Jacobian $f'(\x^*)$ is non-singular.
Furthermore, an important property of contractive mappings is the relation between the residual and error. Specifically, for all \(\x \in \bbR^d\), the error satisfies
\[
\|\x - \x^*\| \leq \| \x - g(\x)\| + \|g(\x) - g(\x^*)\| \leq \|f(\x)\| + \contraction \|\x - \x^*\|,
\]
which implies $\|\x - \x^*\| \leq (1 - \contraction)^{-1} \|f(\x)\|$.
Meanwhile, the residual satisfies $$
\|f(\x)\| = \| (\x  -\x^*) - [g(\x) - g(\x^*)]\| \leq (1+ \contraction)\|\x -\x^*\|.
$$

\begin{theorem}[Local \edit{$q$-}linear convergence of residual]
\label{theorem:localconv}
Assume Assumptions~\ref{assumption:gaap} and \ref{assm:condst} 
hold, with $\damping\geq\beta>0$ for some $\beta$, and the contraction constant $\contraction$ in \Cref{assumption:gaap} satisfies $\contraction<1$.
Let $\x^*$ be the fixed-point solution, i.e., $\x^*=g(\x^*)$.
\edit{For any constant $\rho$ satisfying $\sup_t[(1-\damping) + \damping \contraction]\theta_t <\rho <1$,
there exists $\x_0$  sufficiently close to $\x^*$ such that $\f_t$ converges $q$-linearly to $f(\x^*)=0$, i.e., $\|\f_{t+1}\| \leq \rho \|\f_{t}\|$. }
\end{theorem}

\begin{proof}

Recall the bound on $\|\f_{t+1}\|$ given in \eqref{equ:boundresidual}.
As $\contraction<1$, $\damping\geq\beta>0$ and $\theta_t\leq 1$, 
the coefficient in the linear term satisfies
$[(1-\damping) + \damping \contraction]\theta_t <1$.
Thus, there exists $\rho$ that satisfies $\sup_t[(1-\damping) + \damping \contraction]\theta_t <\rho <1$.
Additionally, the coefficient in the higher-order term includes $\|\Bt^{-1}\|$ where $\Bt \in \bbB$ is invertible.

We first find the conditions to bound \( \|\Bt^{-1}\| \). 
We choose a small \(\epsilon > 0\). According to the continuity property of the Jacobian inverse~\cite[Lemma~2.1]{dembo1982inexact} and the fact that \( f'(\x^*) \) is nonsingular, there exists \(\delta_1 > 0\) such that \( f'(\x_t) = \Jt \) is invertible and \( \| f'(\x_t)^{-1} - f'(\x^*)^{-1} \| \leq \epsilon \) provided that \( \|\x_t - \x^*\| \leq (1 - \contraction)^{-1}\delta_1 \), which can be satisfied by requiring  \( \|f(\x_t)\| \leq \delta_1 \).
Furthermore, according to \eqref{matrixinversebound}   and $\|\Et\|\leq C_E\|\f_t\|$ under Assumptions~\ref{assumption:gaap} and \ref{assm:condst}, if \(\Jt\) is invertible, there exists a sufficiently small \(\delta_2 > 0\) such that \((\Jt + \Et)\) is invertible and \(\|(\Jt + \Et)^{-1}\| \leq \|\Jt^{-1}\| + \epsilon\) when \(\|\f_t\| \leq \delta_2\). 
Note that \(\Jt + \Et \in \bbB\).
Therefore, if \(\|\f_t\| \leq \min\{\delta_1, \delta_2\}\), there exists \(\Bt \in \bbB\) which is invertible and satisfies \(\|\Bt^{-1}\| \leq M^*\) where
\(M^* := \|f'(\x^*)^{-1}\| + 2\epsilon\).

Next, we find conditions to guarantee the monotonic decay of the residual, i.e., $\|\f_{t+1}\| \leq  \rho \|\f_t\|$.
According to \eqref{equ:boundresidual}, 
\[\textstyle
\begin{aligned}
\|\f_{t+1}\| &\leq \left[(1-\damping) + \damping \contraction\right] \theta_t \|\f_t\| + \sqrt{1 - \theta_t^2} \|\Bt^{-1}\| \left[\frac{\gamma}{2} \sqrt{1 - \theta_t^2}\|\Bt^{-1}\|  + C_E\right] \|\f_t\|^2\\ \textstyle
&\leq \left[(1-\damping) + \damping \contraction\right]\theta_t\|\f_t\| +  \|\Bt^{-1}\| \left[\frac{\gamma}{2} \|\Bt^{-1}\|  + C_E\right] \|\f_t\|^2\\ \textstyle
& = \left[[(1-\damping) + \damping \contraction]\theta_t +  \|\Bt^{-1}\| \left(\frac{\gamma}{2} \|\Bt^{-1}\|  + C_E\right)\|\f_t\| \right] \|\f_t\|.
\end{aligned}
\]
We have that if \(\|\f_t\| \leq \min\{\delta_1, \delta_2\}\), 
\begin{equation}\label{eq:localqpf}
 \textstyle
\|\f_{t+1}\| \leq \left[[(1-\damping) + \damping \contraction]\theta_t +  M^* \left(\frac{\gamma}{2} M^*  + C_E\right)\|\f_t\| \right] \|\f_t\|.
\end{equation}
It follows that $\|\f_{t+1}\| \leq  \rho \|\f_t\|$ when 
$\|\f_t\| \leq \delta_3$ where 
$
\delta_3= \frac{ \rho -\sup_t[ (1-\damping) + \damping \contraction)]\theta_t} { M^* \left(\frac{\gamma}{2} M^*  + C_E\right)}>0$.
Thus, if $\|\f_t\| \leq \min\{\delta_1, \delta_2, \delta_3\}$, we have $\|\f_{t+1}\| \leq  \rho \|\f_t\|$.

Since \( \rho < 1 \), if \(\|\f_0\| \leq \min\{\delta_1, \delta_2, \delta_3\}\), then \(\|\f_{t+1}\| \leq \rho \|\f_t\|\) for all \( t \geq 0 \). This condition, \(\|\f_0\| \leq \min\{\delta_1, \delta_2, \delta_3\}\), is satisfied when \(\x_0\) is sufficiently close to \(\x^*\) due to the continuity of \( f \). This concludes the proof.

\hfill $\qed$ $\proofbox$

\end{proof}

We have established the linear convergence of the residual, \(\|\f_{t+1}\| \leq \rho \|\f_t\|\). The convergence of the iterates \(\{\x_t\}\) follows as
\[
\|\x_t - \x^*\| \leq (1 + \contraction)\|\f_t\| \leq (1 + \contraction)\rho^t\|\f_0\|.
\]

\edit{
Finally, the following \Cref{cor:asymptotic-qfactor} provides an upper bound for the asymptotic $q$-linear convergence factor of AAP.
\begin{corollary}
\label{cor:asymptotic-qfactor}
Assume Assumptions~\ref{assumption:gaap}, \ref{assm:condst}, and \ref{assum:ytgt} 
hold, with $\damping\geq\beta>0$ for some $\beta$, and the contraction constant $\contraction$ in \Cref{assumption:gaap} satisfies $\contraction<1$.
Let $\x^*$ be the fixed-point solution, i.e., $\x^*=g(\x^*)$.
When $\|\f_t\|$ converging to $0$,
the asymptotic $q$-linear convergence factor of AAP satisfies
\begin{equation}
\label{equ:uppperqfactorNonSPD}
  \lim_{t \to \infty}  \frac{\|\f_{t+1}\|}{\|\f_t\|}
  \;\le\;
   [(1-\beta) + \beta \contraction]   \max_{\|v\|=1}
 \inf _{p \in \mathbb{P}_m}\|p(f'(\x^*)) v\|,
\end{equation}
where $\mathbb{P}_m$ denotes the set of polynomials of degree at most $m$ with $p(0)=1$.
Furthermore,  when $f'(\x^*)$ is symmetric and positive definite (SPD), then
\begin{equation}
\label{equ:uppperqfactorSPD}
  \lim_{t \to \infty}  \frac{\|\f_{t+1}\|}{\|\f_t\|}
  \;\le\;
  2 [(1-\beta) + \beta \contraction] \left( \frac{\sqrt{\cond(f'(\x^*))} - 1}{\sqrt{\cond(f'(\x^*))} + 1} \right)^m.
\end{equation}
\end{corollary}

This corollary is a direct consequence from the facts that \(\|\f_{t+1}\| /\|\f_t\| \rightarrow [(1-\damping) + \damping \contraction]\theta_t \) as $\|\f_t\|\to 0$ (Eq.~\eqref{eq:localqpf}), $\theta_t $ converging to the Newton-GMRES gain (\Cref{theorem:gain}), and existing bounds on the GMRES residual, such as the one given in \eqref{equ:upppedboundgain}.
Here,
\eqref{equ:uppperqfactorNonSPD} corresponds to the \textit{worse-case} GMRES residual bound given in \cite{liesen2004convergence}, whereas \eqref{equ:uppperqfactorSPD} uses \eqref{equ:upppedboundgain} under the additional assumption that the Jacobian is SPD. 
When $f'(\x^*)$ is SPD and well-conditioned, the bound on $q$-linear convergence factor given in \eqref{equ:uppperqfactorSPD} could be much sharper than the \(\kappa^m\) factor from an \(m\)-step Picard iteration.
When $f'(\x^*)$ satisfies additional assumptions, such as normality, better estimates on the $q$-linear convergence factor could be derived from sharper GMRES residual bounds. We refer the readers to~\cite{liesen2004convergence,embree2022descriptive} for details.

}

\begin{remark}

AAP can be regarded as an inexact Newton method~\cite{dembo1982inexact} whose search direction satisfies $\| \Jt \p_t - \f_t \| \leq \eta_t \|\f_t\|$ where $\p_t := \x_t - \x_{t+1}$.
This \( \eta_t \) is called the forcing term of the inexact Newton method and assesses how well \( \p_t \) approximates the exact Newton direction in each iteration. For any invertible \(\Bt \in \bbB\), we have
\[
\|\Jt\p_t - \f_t\| \leq \left[ [(1-\damping) + \damping \contraction]\theta_t + \sqrt{ 1 - \theta_t^2 }\|\Et\| \|\Bt^{-1}\| \right] \|\f_t\|.
\]
Since \(\p_t = \widehat{\p}_t + \damping \widehat{\r}_t\), the above bound comes from the decomposition of \(\Jt \p_t - \f_t = (I + \damping \Jt) \widehat{\r}_t - \Et \widehat{\p}_t\). Thus, the forcing term of AAP satisfies \(\eta_t = [(1-\damping) + \damping \contraction]\theta_t + \sqrt{1 - \theta_t^2}\|\Et\| \|\Bt^{-1}\|\).
If \(\|\Et\| \leq C_E \|\f_t\|\), then \(\eta_t \leq [(1-\damping) + \damping \contraction]\theta_t + \mathcal{O} (\|\f_t\|)\). It follows that, when \(\|\f_t\|\) approaches 0, \(\eta_t\) converges to \([(1-\damping) + \damping \contraction]\theta_t\), with the optimization gain \(\theta_t\) converging to the Newton-GMRES gain as shown in \Cref{theorem:gain}.
One sufficient condition to guarantee the local convergence of AAP, as an inexact Newton method, is that \(\sup_t \eta_t < 1\)~\cite{dembo1982inexact}, which can be ensured by the same assumptions as in \Cref{theorem:localconv}.
    
\end{remark}

\section{Numerical results}
\label{sec:numerical-results}

In this section, we present numerical experiments to demonstrate the performance of the AAP($m$) against other algorithms, including:

\begin{itemize}
    \item Picard: Picard  iteration with $\x_{t+1} = g(\x_t)$.
    \item AA($m$)~\cite{walker2011anderson}: Classical Anderson acceleration with a fixed window size $m$, taking an AA step at each iteration.
    This way, the algorithm is restarted every $m+1$ iteration.
    \item resAA($m$)~\cite{ouyang2020anderson}: Restarted Anderson acceleration, where an AA step is taken at each iteration. The number of history iterates used increases until a given threshold $m$ is reached, after which the number of history iterates used is reset to 0.
    \item Newton-GMRES\cite{choquet1993some}: Solves \(\Jt \p_t^{\textup{N}} = \f_t\) using GMRES($m$) at each global iteration, updating \(\x_{t+1} = \x_t - \p_t^{\textup{N}}\). There is no line search applied.
\end{itemize}
Note that AAP($m$) evaluates $m+1$ Picard iterations in each global iteration, whereas Picard, AA($m$), and resAA($m$) evaluate one Picard iteration per iteration. 
In all experiments, we used the damping ratio $\beta_t=1$.
The performance of each algorithm may vary depending on the specific problem and the parameters. The examples provided
are chosen to better
illustrate the features of AAP.
The code can be accessed at \url{https://github.com/xue1993/AAP.git}.

\subsection{Logistic regression}
\label{sec:logistic}

The first example is a nonlinear fixed-point problem derived from the gradient descent (GD) algorithm to minimize a function \( h(\x) \). The GD step with stepsize $\eta$ is formulated as  $
\x_{t+1} = \x_t - \eta \nabla h(\x_t)$,
which can be regarded as the Picard iteration on the fixed-point problem:
\[
\x = g(\x) = \x - \eta \nabla h(\x).
\]
Here, \( h \) is the loss function of a regularized logistic regression problem. Specifically,
\[
h(\x) = \frac{1}{n} \sum_{i=1}^{n} \log(1 + \exp(-y_i \x^\top \v_i)) + \frac{\mu}{2} \|\x \|^2,
\]
where $\v_i \in \mathbb{R}^d$ is a feature vector and  $y_j \in\{-1,1\}$ is the corresponding response, \( \x \) represents the weights, and \( \mu \) is the regularization parameter.
We use two classical datasets: \texttt{w8a} $(n=49,749$ and $d=300)$ and \texttt{covtype} ( $n=581,000$ and $d=54)$. Both datasets are available at the LIBSVM website~\cite{Chang2011LIBSVM}.
The function
$h$ is strongly convex.
For all examples, we use \(\eta = 1\) and ensure that \(g\) is a contractive mapping. The global minimizer is denoted as \(\x^*\), and unless specified, the default initial guess is \(\x_0 = 0\).

\Cref{fig:logistic_compare} shows the results of different algorithms on the \texttt{w8a}  and \texttt{covtype} datasets.
From \Cref{fig:logistic_compare}(a) about \texttt{w8a} dataset,
we observe that AAP outperforms other algorithms.
The convergence plot for AAP in terms of the number of Picard iterations exhibits a stair-step effect, with flat regions in each global iteration from Picard steps  and rapid decay from the AA step marked as red dot. Meanwhile, the errors of resAA and AA oscillate, especially in the first few iterations. This oscillation is also observed in the landscape of function $h$ and optimization path plots in  \Cref{fig:logistic_compare}(c), where AA and resAA overshoot around the global minimizer, slowing their convergence.
  This phenomenon can be explained as their approximated Jacobians at the AA step are inaccurate as they use points that are far from each other.

For the \texttt{covtype} dataset, both AAP and resAA perform well, as shown in \Cref{fig:logistic_compare}(b).
The optimization paths of all AA algorithms in \Cref{fig:logistic_compare}(d) also show less oscillation on this dataset.
The third row in \Cref{fig:logistic_compare} illustrates the impact of varying \( m \) on different AA algorithms. It is observed that increasing \( m \) does not necessarily improve the convergence rate for all AA variants.
Compared to AA and resAA, the performance of AAP demonstrates less oscillation across different values of \( m \). 
Nonetheless, the convergence rate of  AAP  tends to decrease over iterations.
This is likely due to machine round-off error, as the points generated by Picard are very close to each other, making it difficult to retain 
useful
curvature information from $\Yt$.

\begin{figure}[htbp]
    \centering
    \subfloat[\texttt{w8a}($m=5$)]{\includegraphics[width=0.47\textwidth]{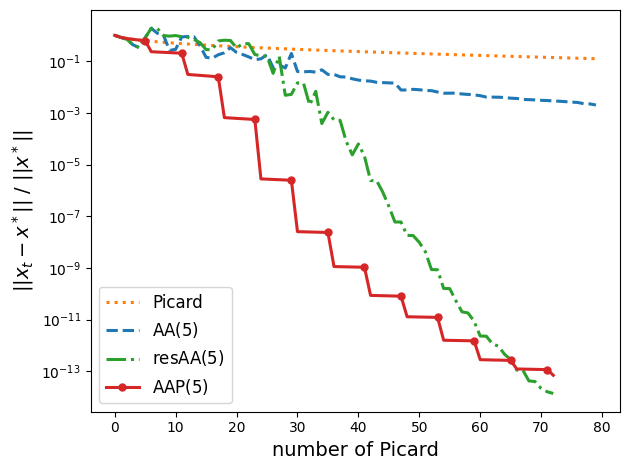}} 
    \hfill
    \subfloat[\texttt{covtype}($m=5$)]{\includegraphics[width=0.47\textwidth]{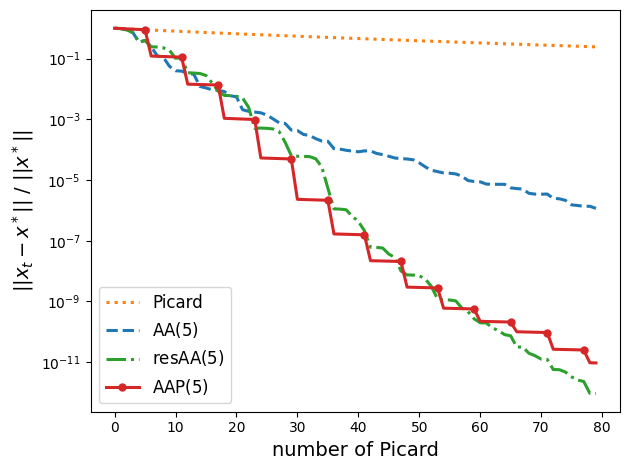}}

    \subfloat[\texttt{w8a}: optimization paths($m=5$)]{\includegraphics[width=0.47\textwidth]{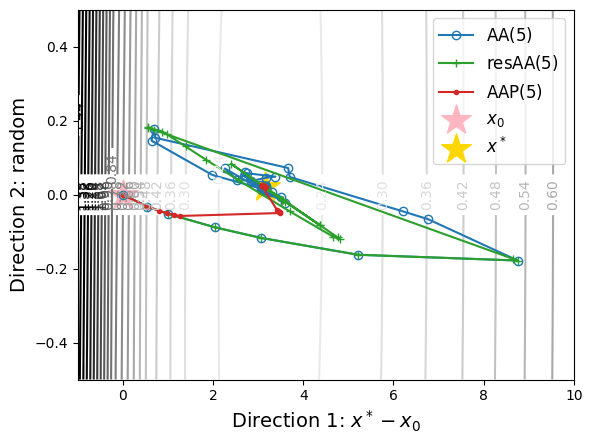}}        
    \hfill
    \subfloat[\texttt{covtype}: optimization paths($m=5$)]{\includegraphics[width=0.47\textwidth]{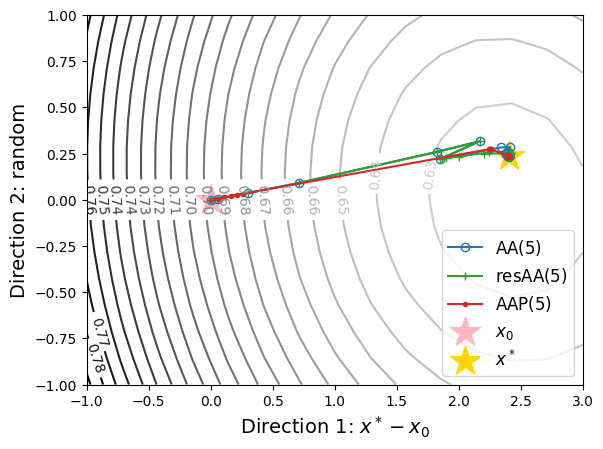}} 

    \subfloat[\texttt{w8a}: varying $m$ ]{\includegraphics[width=0.47\textwidth]{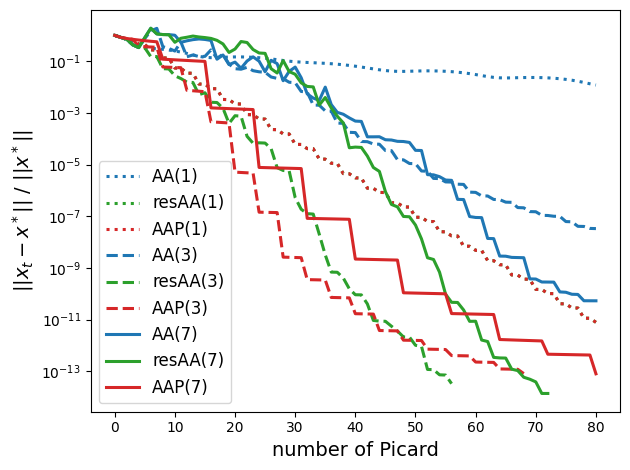}}        
    \hfill
    \subfloat[\texttt{covtype}: varying $m$]{\includegraphics[width=0.47\textwidth]{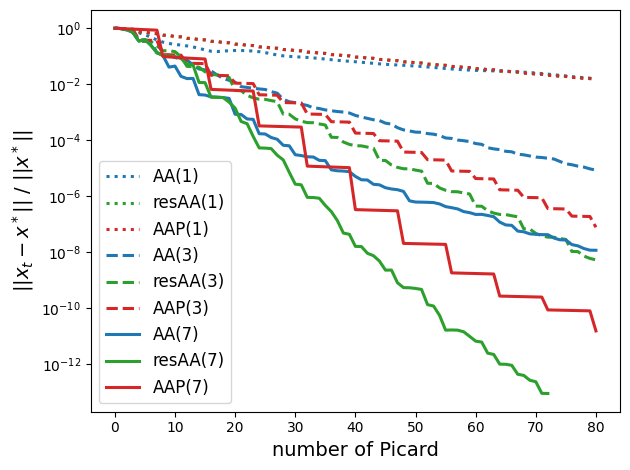}}

    \caption{Comparison of different algorithms applied to logistic regression with \(\mu = 0.01\) on both \texttt{w8a} and \texttt{covtype} datasets. First row: number of Picard iterations versus relative error. Second column: Landscape of the function $h$ and the optimization paths of different algorithms. The true domain dimension of $h$ is $300$ for the \texttt{w8a} dataset and $54$ for the \texttt{covtype} dataset. 
    To visualize $h$, we show the level set of \(h\) along two directions starting from the initial point \(\x_0 = 0\): normalized \(\x_0 - \x^*\) and a random direction. 
    The optimization paths are also projected onto these two directions,
    with each marker representing one Picard iteration.  Due to the inefficiency of the history data points, the first $m$ iterations of AA($m$) are equivalent to those of resAA($m$), with their plots overlapping. Third row: effect of varying $m$ on different AA algorithms. It is noteworthy that when \(m = 1\), AAP($1$) is equivalent to resAA($1$), resulting in the vanishing plot of resAA($1$).}
    \label{fig:logistic_compare}
\end{figure}

\edit{
The first row of \Cref{fig:logistic_m-theta} reports the residual norm $\| f(\x_t)\|$  versus the global iteration count \(t\) when solving the logistic regression problem on the \texttt{w8a} dataset using the Picard iteration, Newton-GMRES, and AAP with varying $m$. 
Here the regularization parameter is chosen to be \(\mu = 0.0001\), which leads to a contraction constant $\contraction \approx 0.9999$, and thus the convergence of the Picard iteration is slow.
In all three cases, significant improvements over the Picard iteration are observed for both AAP and Newton-GMRES.
The similar convergence behavior between AAP and Newton-GMRES confirms the connections established in Section~\ref{sec:relation}.
Furthermore, it is evident that the residual $\| f(\x_t)\|$ of AAP decreases monotonically when the residual is sufficiently small. 
This observation agrees with the local $q$-linear convergence result of AAP established in \Cref{theorem:localconv}.

The second row of \Cref{fig:logistic_m-theta} shows the optimization gain $\theta_t$ of AAP, the associated Newton-GMRES gain, and the upper bound on the Newton-GMRES gain given in \eqref{equ:upppedboundgain} in terms of the global iteration count \(t\). 
Figures~\ref{fig:logistic_m-theta}(d) and \ref{fig:logistic_m-theta}(e) demonstrate that the AAP optimization gain $\theta_t$ converges to the Newton-GMRES gain, as proved in \Cref{theorem:gain}. 
However, in the case of \(m = 7 \) considered in Figure~\ref{fig:logistic_m-theta}(f), $\theta_t$ deviates from the Newton-GMRES gain when the residual is small.
We suspect that this discrepancy is caused by inaccurate least-squares solutions in AAP due to ill-conditioning.
For each value of $m$, we observe from the pair of figures, e.g., Figures~\ref{fig:logistic_m-theta}(a) and \ref{fig:logistic_m-theta}(d), that the convergence rate of AAP is largely influenced by $\theta_t$ and that the convergence is faster with smaller $\theta_t$, as expected from the theoretical estimates in Section~\ref{sec:conv}.
}

\begin{figure}[htbp]
    \centering
    \subfloat[$m=1$]{\includegraphics[width=0.33\textwidth]{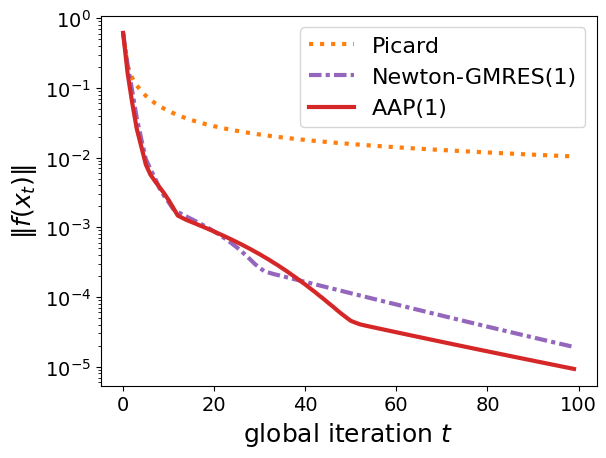}} 
    \subfloat[$m=3$]{\includegraphics[width=0.33\textwidth]{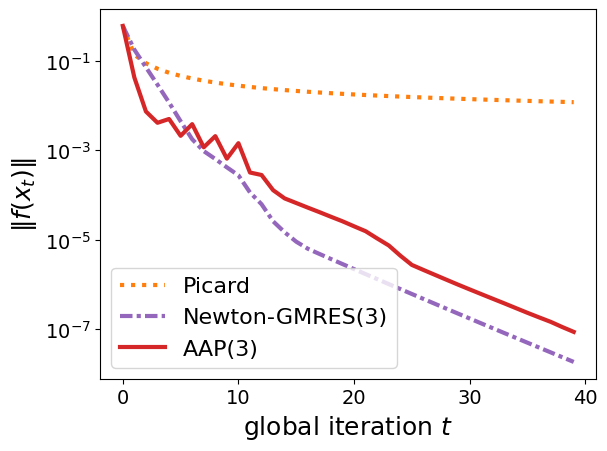}} 
    \subfloat[$m=7$]{\includegraphics[width=0.33\textwidth]{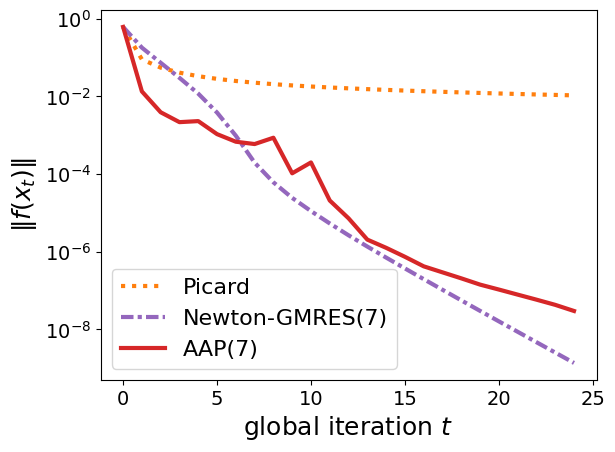}} \\
\hspace{0.15cm}
    \subfloat[$m=1$]{\includegraphics[width=0.31\textwidth]{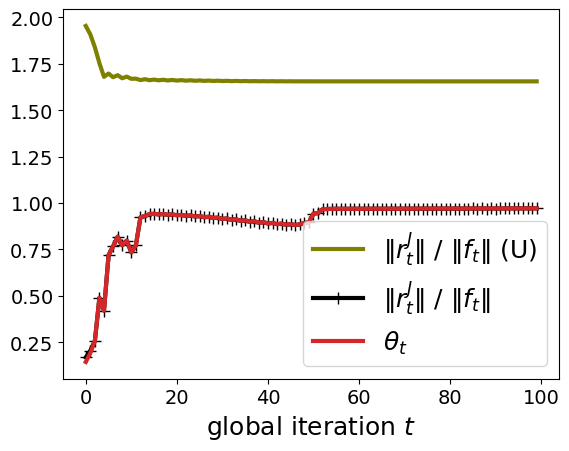}} 
    \hspace{0.05cm} \hfill
    \subfloat[$m=3$]{\includegraphics[width=0.31\textwidth]{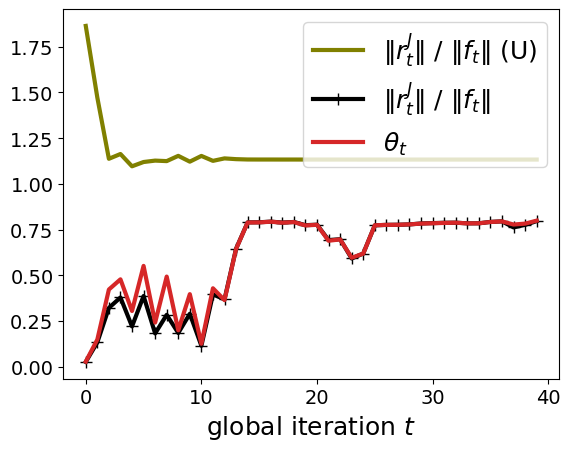}} 
    \hspace{0.05cm}\hfill
    \subfloat[$m=7$]{\includegraphics[width=0.31\textwidth]{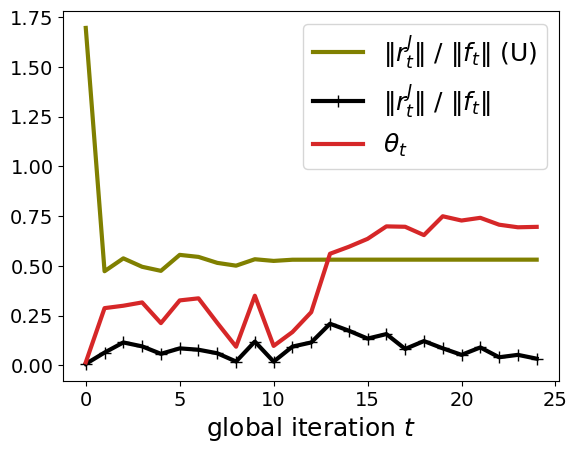}} 
    
    \caption{Convergence of AAP on logistic regression with \(\mu = 0.0001\) on the \texttt{w8a} dataset. 
    The first row shows global iterations versus residual norm $\|f(\x_t)\|$ for different algorithms, including Picard and Newton-GMRES as reference.
    In each global iteration, Picard performs \(m+1\) steps to have a fair comparison with AAP. 
    The second row presents global iterations versus optimization gain \(\theta_t\). 
    We also display the value of the Newton-GMRES gain \(\|\r^J_t\|/\|\f_t\|\), as defined in \eqref{definition:gain}. 
    The Newton-GMRES gain \(\|\r^J_t\|/\|\f_t\|\) (U) represents the theoretical upper bound of \(\|\r^J_t\|/\|\f_t\|\) given in \eqref{equ:upppedboundgain}. Here the Jacobian is the Hessian of \(h\), which is symmetric and positive definite.}
    \label{fig:logistic_m-theta}
\end{figure}

\edit{
Finally, we investigate in Figure~\ref{fig:qfactor} the behavior of the convergence factor \(\frac{\|\f_{t+1}\|}{\|\f_{t}\|}\) of AAP under various settings of \(m\) and \(\mu\). 
For all three examples, the factor eventually drops below one, indicating that the iterations have entered a \(q\)-linear convergence regime but with some oscillation in the $q$-linear convergence factor. The red lines represent the theoretical upper bound of the asymptotic $q$-linear convergence factor given in 
\eqref{equ:uppperqfactorSPD}, which closely matches the maximal observed factors shown in Figure~\ref{fig:qfactor} for 
different random initial guesses when $\mu=0.01$. 
When \(\mu = 0.0001\), the observed factors often exceed one 
during the first 15 iterations.
Since \(\|\Jt^{-1}\| \approx 1/\mu\), this phenomenon is consistent with Corollary~\ref{cor:boundJt} where we show a large \(\|\Jt^{-1}\|\) may lead to oscillations in \(\|\f_{t}\|\) before it 
eventually converges. Overall, these observations validate the theoretical estimates for the asymptotic convergence behavior of AAP in Section~\ref{sec:conv}.
}

\begin{figure}[htbp]
    \centering
    \subfloat[$\mu=0.01, m=1$]{\includegraphics[width=0.33\textwidth]{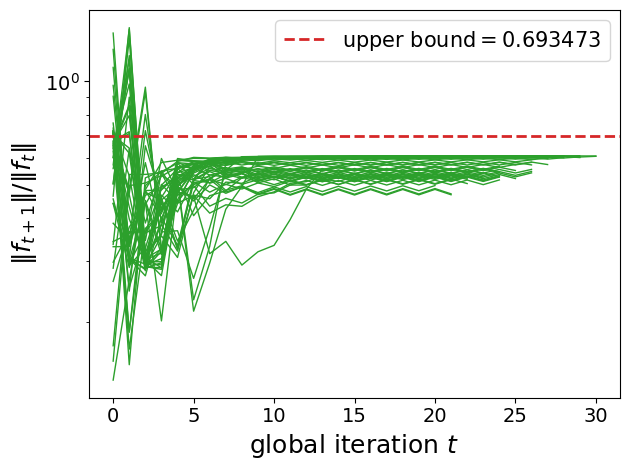}} 
    \subfloat[$\mu=0.01, m=3$]{\includegraphics[width=0.33\textwidth]{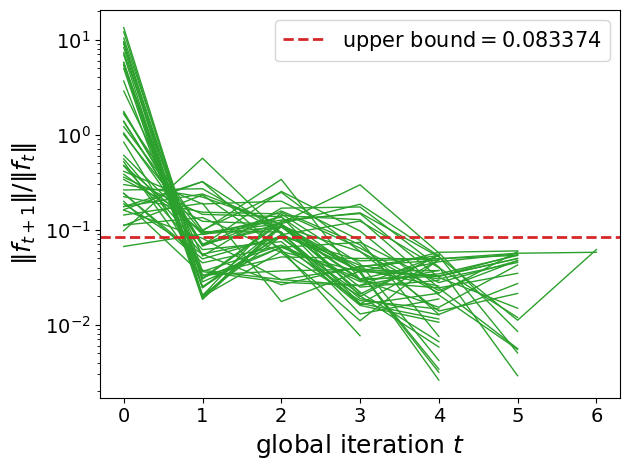}} 
    \subfloat[$\mu=0.0001,m=3$]{\includegraphics[width=0.33\textwidth]{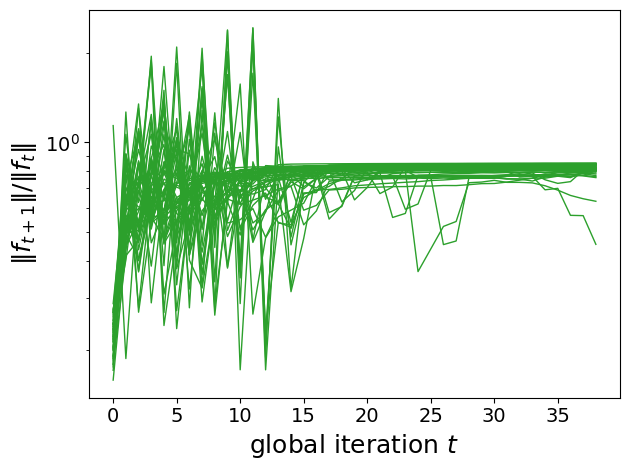}} 
    
    \caption{
 \edit{   
 Residual convergence factor \(\|\f_{t+1}\| / \|\f_{t}\|\) of AAP\((m)\) on the logistic regression problem with the \texttt{w8a} dataset under various \(m\) and \(\mu\).
 The red lines represent the theoretical upper bound given in \eqref{equ:uppperqfactorSPD}. The green lines show the observed factors from different randomly sampled initial guesses.  
        The iteration terminates once the residual norm reaches $10^{-8}$. 
        When \(\mu = 0.0001\), the condition number of \(f'(\x^*)\) is approximately 112, leading to an upper bound nearly $1$. Therefore, we do not plot the upper bound for this example.
    }}
    \label{fig:qfactor}
\end{figure}

\subsection{Nonnegative matrix factorization}
The second example is the Nonnegative Matrix Factorization (NMF) problem, which has been applied in fields such as text mining, image processing, and bioinformatics~\cite{elden2019matrix}. NMF decomposes a matrix \( \bfA \in \mathbb{R}^{d_1 \times d_2} \) into two nonnegative matrices \( \bfW \in \mathbb{R}^{d_1 \times r} \) and \( \bfH \in \mathbb{R}^{r \times d_2} \), i.e., \( \bfA \approx \bfW\bfH \) and $\bfW\geq 0, \bfH\geq 0$. The optimization problem associated with NMF can be formulated as
\[
\min_{\bfW, \bfH} \|\bfA - \bfW\bfH\|_F^2 \quad \text{subject to} \quad \bfW \geq 0, \; \bfH \geq 0,
\]
where \( \|\cdot\|_F \) denotes the Frobenius norm.  A widely used method 
to solve this problem
is the alternating nonnegative least-squares (ANNLS) method: starting from \( \bfW_0 \geq 0 \), one generates a sequence of \( \left(\bfW_t, \bfH_t\right) \) by alternately solving the standard nonnegatively constrained linear least-squares problems:
\[
\bfH_t = \argmin_{\bfH \geq 0} \|\bfA - \bfW_{t-1} \bfH\|_F \quad \text{and} \quad \bfW_{t} = \argmin_{\bfW \geq 0} \|\bfA - \bfW \bfH_t\|_F.
\]
Additionally, a normalization of \( \bfW_t \) is applied at the beginning of each iteration. Following~\cite{walker2011anderson}, we consider ANNLS to be a fixed-point iteration through the assignment \( \left(\bfW_t, \bfH_t\right) \rightarrow \left(\bfW_{t+1}, \bfH_{t+1}\right) \). We perform the AA step on this problem by solving a constrained LS based on the vectorized variable, and we truncate \( \left(\bfW_t, \bfH_t\right) \) after the AA step to guarantee nonnegativity of the sequence.

We construct synthetic NMF problems by generating random matrices \( \bfW \in \mathbb{R}^{300 \times r} \) and \( \bfH \in \mathbb{R}^{r \times 50} \), and then take \( \bfA =\bfW\bfH\in \mathbb{R}^{300 \times 50} \). Consequently, the minimum of the NMF objective function is zero. The initial point \( \bfW_0 \) is chosen randomly. \Cref{fig:NMF} presents the results of different algorithms with the same initialization. It is evident that all AA variants, including AAP, AA, and resAA, significantly improve convergence compared to ANNLS. Since the performance of each algorithm can be influenced by different initializations, we also provide the median and interquartile range plots of 15 runs.

\begin{figure}[htbp]
    \centering
    \subfloat[]{\includegraphics[width=0.47\textwidth]{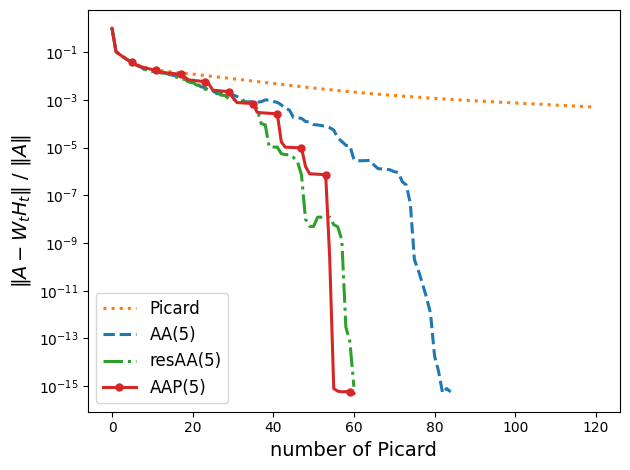}} 
    \hfill
    \subfloat[ ]{\includegraphics[width=0.47\textwidth]{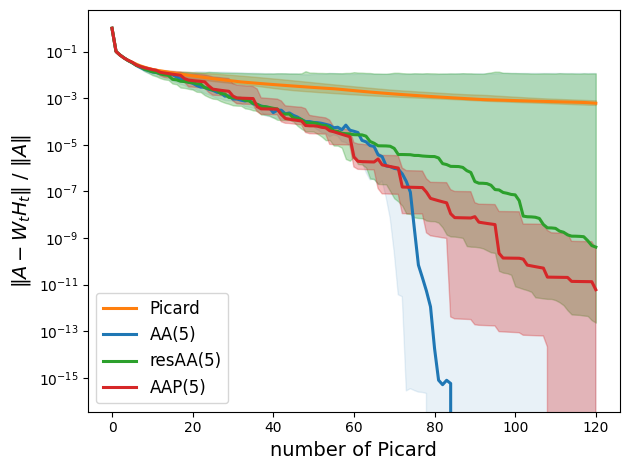}}

    \caption{Comparison of different algorithms on NMF problems with $r=4$.  All plot shows the number of Picard iterations versus the relative residual $\|\bfA - \bfW_t\Ht\|_F/\|\bfA\|_F$. (a): Different algorithms with the same initialization. (b): Median and interquartile range plots of 15 runs with random initializations. 
    }
    \label{fig:NMF}
\end{figure}

\section{Conclusion}

This work focuses on the analysis of the Alternating Anderson Picard method, in which an Anderson acceleration step is periodically applied after a number of Picard iterations. We established the equivalence between AAP and multisecant-GMRES and explored the relation between AAP and Newton-GMRES, with particular emphasis on convergence analysis. 
We proved that the AAP residual converges locally at an improved rate when compared to Picard iteration. 
Numerically, this method has demonstrated efficiency and robustness in our evaluations and is comparable to the Newton-GMRES method without accessing the Jacobian. This underscores the potential of AAP as a practical alternative in scenarios where computational resources are limited or when explicit Jacobian computation is impractical.
\edit{
Potential directions for future investigations include the extension of theoretical results analyzed in this paper to other variants of AA, as well as the development of more general alternating Anderson-Picard methods in which the AA step may involve histories from previous AA steps with potential combination with the restarting strategies considered in \cite{ouyang2024descent,fang2009two}.
}

\section*{Acknowledgments}
X.F.\ would like to express her sincere gratitude to Professor Zhaojun Bai for his valuable suggestions and insightful feedback, which contributed to the improvement of this work. 

\appendix

\section{Uniform boundedness of $\cond (\St)$}
\label{sec:boundsing}
The uniform boundedness of \(\cond(\St)\) is important for the bound of $\Et$ as shown in \Cref{theorem:etbound}.
Thus, in this  section,
we discuss $\cond(\St)$ when the AAP residual $\f_t$ approaches $ 0$.

We have shown that \(\St\) converges to \(\Gt\) as \(\|\f_t\|\) approaches zero. Consequently, \(\cond(\Gt)\) can serve as an estimator for \(\cond(\St)\) when \(\|\f_t\|\) is sufficiently small. The matrix \(\Gt\) is a Krylov matrix. The condition number and the least singular value of Krylov matrices are active research areas~\cite{beckermann2000condition,dax2017numerical,bazan2000conditioning}. Generally speaking, \(\Gt\) can be ill-conditioned even for a moderate number of columns~\cite{beckermann2000condition}.

In the following, we discuss \(\cond(\Gt)\) in a simple case where \(g'(\x_t)\) is symmetric, allowing \(\Gt\) to be decomposed into the product of a diagonal matrix and a Vandermonde matrix.

\begin{lemma}
\label{lemma:vandermonde}
Let \(g'(\x_t)\)be a symmetric matrix with eigendecomposition \(g'(\x_t) = \bfQ^T \bfLambda \bfQ\), where \(\bfQ\) is an orthogonal matrix and \(\bfLambda = \diag(\lambda_1, \ldots, \lambda_d)\).
Denote \(\bfQ \frac{\f_t}{\|\f_t\|} = [a_1, a_2, \ldots, a_d]^T\).
If \(\min_i |a_i| > 0\),
then
\begin{equation}
    \cond(\Gt) \leq \frac{\sqrt{m}}{\left(\min_i |a_i|\right)\,\sigma_{\min}\left(\bfV_m(\lambda_1, \lambda_2, \ldots, \lambda_d)\right)},
\end{equation}
where \(\bfV_m(\lambda_1, \ldots, \lambda_d)\) is a \(d \times m\) rectangular Vandermonde matrix.
\end{lemma}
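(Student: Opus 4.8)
The plan is to factor the Krylov matrix $\Gt$ explicitly using the eigendecomposition of $g'(\x_t)$ and then estimate the singular values of the two factors separately. Writing $\bfK := g'(\x_t) = \bfQ^T \bfLambda \bfQ$, each column of $\Gt$ has the form $\bfK^{(j)}\f_t = \bfQ^T \bfLambda^{(j)} \bfQ \f_t$ for $j = 0, \dots, m-1$. Pulling out $\bfQ^T$ on the left and writing $\bfQ\f_t = \|\f_t\|\,[a_1,\dots,a_d]^T$, the $i$-th component of column $j$ is $\|\f_t\|\, a_i \lambda_i^{(j)}$. Hence $\Gt = \|\f_t\|\,\bfQ^T \bfD \bfV_m$, where $\bfD = \diag(a_1, \dots, a_d)$ and $\bfV_m = \bfV_m(\lambda_1,\dots,\lambda_d)$ is the $d \times m$ rectangular Vandermonde matrix with $(i,j)$ entry $\lambda_i^{(j)}$, $j=0,\dots,m-1$.

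Since $\bfQ^T$ is orthogonal it does not affect singular values, so $\cond(\Gt) = \cond(\bfD\bfV_m)$; the scalar $\|\f_t\|$ also cancels in the condition number. The idea is then to bound $\cond(\bfD\bfV_m) = \sigma_1(\bfD\bfV_m)/\sigma_{\min}(\bfD\bfV_m)$ by bounding numerator and denominator. For the numerator, $\sigma_1(\bfD\bfV_m) \le \|\bfD\bfV_m\|_F$, and a direct entrywise estimate using $|a_i|\le 1$ (which follows since $\bfQ\f_t/\|\f_t\|$ is a unit vector) together with $|\lambda_i| \le \|\bfK\| \le \kappa \le 1$ gives $\|\bfD\bfV_m\|_F \le \sqrt{m}$ — each of the $m$ columns has norm at most $1$ because $\sum_i a_i^2 \lambda_i^{2j} \le \sum_i a_i^2 = 1$. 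For the denominator, I would use the submultiplicative-type lower bound $\sigma_{\min}(\bfD\bfV_m) \ge \sigma_{\min}(\bfD)\,\sigma_{\min}(\bfV_m) = (\min_i |a_i|)\,\sigma_{\min}(\bfV_m)$. Combining these two estimates yields exactly
\[
\cond(\Gt) \le \frac{\sqrt{m}}{(\min_i |a_i|)\,\sigma_{\min}(\bfV_m(\lambda_1,\dots,\lambda_d))},
\]
which is the claimed bound.

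The one step that needs a little care is the lower bound $\sigma_{\min}(\bfD\bfV_m) \ge \sigma_{\min}(\bfD)\,\sigma_{\min}(\bfV_m)$. For a square nonsingular $\bfD$ and a tall matrix $\bfV_m$ of full column rank, this follows from $\|\bfD\bfV_m \x\| \ge \sigma_{\min}(\bfD)\|\bfV_m\x\| \ge \sigma_{\min}(\bfD)\sigma_{\min}(\bfV_m)\|\x\|$ for all $\x$, taking the infimum over unit $\x$; this is where the hypothesis $\min_i|a_i| > 0$ is used, ensuring $\bfD$ is invertible and $\sigma_{\min}(\bfD) = \min_i|a_i|$. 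I expect no genuine obstacle here — the bound is essentially a bookkeeping exercise once the Vandermonde factorization is written down — though one should note in passing that $\bfV_m$ having full column rank (hence $\sigma_{\min}(\bfV_m)>0$, so the bound is meaningful) is guaranteed by \Cref{assu:syfullrank} via $\St$ being full column rank together with the convergence $\St \to \Gt$, or more directly by the distinctness of the relevant eigenvalues.
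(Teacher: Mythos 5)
Your proposal is correct and follows essentially the same route as the paper: the same factorization $\frac{1}{\|\f_t\|}\bfQ\Gt = \bfD\bfV_m$, the same Frobenius-norm bound $\sqrt{m}$ on the largest singular value (both relying on $|\lambda_i|\leq\contraction\leq 1$ from \Cref{assumption:gaap}), and the same lower bound $\sigma_{\min}(\bfD\bfV_m)\geq(\min_i|a_i|)\,\sigma_{\min}(\bfV_m)$, which you prove via $\|\bfD\y\|\geq\sigma_{\min}(\bfD)\|\y\|$ while the paper equivalently inserts $\bfD^{-1}\bfD$ and uses $\|\bfD^{-1}\|=1/\min_i|a_i|$. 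No gaps.
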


\begin{proof}
Denote \(\bfK = g'(\x_t)\).
Since \(\bfK = \bfQ^T \bfLambda \bfQ\), we have \(\bfK^{(\ell)} = \bfQ^T \bfLambda^{(\ell)} \bfQ\). By the definition of \(\Gt\), we have
\[
\begin{aligned}
    \bfQ\Gt &= \bfQ[\f_t, \bfK^{(1)} \f_t, \bfK^{(2)} \f_t, \ldots, \bfK^{(m-1)} \f_t] \\
    &= [\bfQ\f_t, \bfQ\bfK^{(1)} \f_t, \bfQ\bfK^{(2)} \f_t, \ldots, \bfQ\bfK^{(m-1)} \f_t] \\
    &= [\bfQ\f_t, \bfLambda^{(1)} \bfQ\f_t, \bfLambda^{(2)} \bfQ\f_t, \ldots, \bfLambda^{(m-1)} \bfQ\f_t].
\end{aligned}
\]
It follows from \(\bfQ \frac{\f_t}{\|\f_t\|} = [a_1, a_2, \ldots, a_d]^T\) that
\[
\begin{aligned}
    \frac{1}{\|\f_t\|} \bfQ\Gt& = \begin{bmatrix}
a_1 & a_1 \lambda_1 & a_1 \lambda_1^{2} & \cdots & a_1 \lambda_1^{m-1} \\
a_2 & a_2 \lambda_2 & a_2 \lambda_2^{2} & \cdots & a_2 \lambda_2^{m-1} \\
\vdots & \vdots & \vdots & \ddots & \vdots \\
a_d & a_d \lambda_d & a_d \lambda_d^{2} & \cdots & a_d \lambda_d^{m-1}
\end{bmatrix} \\
&= \diag(a_1, a_2, \ldots, a_d) \bfV_m(\lambda_1, \lambda_2, \ldots, \lambda_d).
\end{aligned}
\]
Denote $\bfD = \diag(a_1, a_2, \ldots, a_d)$ and $ \bfV= \bfV_m(\lambda_1, \lambda_2, \ldots, \lambda_d)$.
Since \(\bfQ\) is an orthogonal matrix, we have
\[\textstyle
\sigma_{\min}\left(\frac{1}{\|\f_t\|} \Gt\right) = \sigma_{\min}\left(\frac{1}{\|\f_t\|} \bfQ\Gt\right)
= \sigma_{\min}\left(\bfD\bfV\right).
\]

Assume $\sigma_{\min}(\bfD\bfV) = \| \bfD\bfV\bar{\x }\|$ with $\|\bar{\x }\|$=1.
Since $\sigma_{\min}( \bfV) =  \min _{\x  \neq 0} \frac{\| \bfV \x \|}{\|\x \|}$,
\[ \textstyle
\sigma_{\min}( \bfV)
\leq  \frac{\| \bfV\bar{\x }\|}{\|\bar{\x }\|}  =  
\frac{\|\bfD^{-1} \bfD \bfV\bar{\x }\|}{\|\bar{\x }\|}
\leq \|\bfD^{-1}\| \,\| \bfD \bfV\bar{\x }\|
= \frac{1}{\min_i |a_i|} \| \bfD \bfV\bar{\x }\|.
\]
Thus, $\sigma_{\min}\left(\frac{1}{\|\f_t\|} \Gt\right) \geq \left(\min_i |a_i|\right)\,\sigma_{\min}\left(\bfV\right)$.
On the other hand,
$\sigma_{\max}\left(\frac{1}{\|\f_t\|} \Gt\right) = \|\left(\frac{1}{\|\f_t\|} \Gt\right)\|
\leq \|\left(\frac{1}{\|\f_t\|} \Gt\right)\|_F\leq \sqrt{m}$. 
Combining these with \[ \textstyle
\cond(\Gt) =
\cond( \frac{1}{\|\f_t\|}\Gt)
= \sigma_{\max}\left(\frac{1}{\|\f_t\|} \Gt\right) / \sigma_{\min}\left(\frac{1}{\|\f_t\|} \Gt\right),
\]
gives the result.$\qed$
\end{proof}

\vspace{0.3cm}
In \cref{lemma:vandermonde},
the condition that $\min_i |a_i| > 0$ is equivalent to the vector \(\f_t\) not being parallel to any of the eigenvectors of the matrix \(g'(\x_t)\).
The least singular value of the Vandermonde matrix $\bfV_m(\lambda_1, \ldots, \lambda_d)$ depends on the distribution of its seeds \(\lambda_i\)~\cite{bazan2000conditioning, kunis2021condition, dax2017numerical}.
Theorem 6 of \cite{dax2017numerical} shows that when the largest seed \(|\lambda_1| < 1\), the least singular value \[\sigma_{\min}(\bfV_m(\lambda_1, \ldots, \lambda_d)) \leq \sigma_{\max}(\bfV_m(\lambda_1, \ldots, \lambda_d)) |\lambda_1|^{(m)},\] which decays exponentially with respect to \(m\). 
It indicates that a large $m$ may lead to a significant increase in \(\cond( \Gt)\),
which in turn can cause a dramatic increase in $\|\Et\|$.
In addtion, since $\St   \rightarrow  \Gt$, and $ \Yt \rightarrow  \Gt^{\dagger}$,
large $m$ may cause instability of the least square problem in the AA step.
However, when \(\f_t\) is small, machine round-off errors will also affect the numerical value of  \(\cond( \St)\) and \(\cond( \Yt)\).

\bibliographystyle{siamplain}
\bibliography{refs}
\end{document}